\renewcommand{\(}{\left(}
\renewcommand{\)}{\right)}
\DeclareMathOperator{\ind}{ind}
\theoremstyle{plain}
\newtheorem{definition}{Definition}[section]
\newtheorem{theorem}[definition]{Theorem}
\newtheorem{proposition}[definition]{Proposition}
\newtheorem{corollary}[definition]{Corollary}
\newtheorem{remark}[definition]{Remark}
\begin{document}
\title[]{On the critical points of Steklov eigenfunctions}
\thanks{The first author is partially supported by MUR-PRIN-2022AKNSE4 ``Variational and Analytical aspects of Geometric PDEs''. The second author is partially supported by  the MUR-PRIN-20227HX33Z ``Pattern formation in nonlinear phenomena''. The first and the second authors are also partially supported by INDAM-GNAMPA project ``Problemi di doppia curvatura su varietà a bordo e legami con le EDP di tipo ellittico''. The third author acknowledges support of the INDAM-GNSAGA project ``Analisi Geometrica: Equazioni alle Derivate Parziali e Teoria delle Sottovarietà'' and of the the project ``Perturbation
problems and asymptotics for elliptic differential equations: variational and potential theoretic methods'' funded by the MUR Progetti di Ricerca di Rilevante Interesse Nazionale (PRIN) Bando 2022 grant 2022SENJZ3.}
\author[Battaglia]{Luca Battaglia}
\address{Dipartimento di Matematica e Fisica, Universit\`a degli Studi Roma Tre, Via della Vasca Navale 84 - 00146 Roma, Italy, e-mail: {\sf luca.battaglia@uniroma3.it}}
\author[Pistoia]{Angela Pistoia}
\address{Dipartimento di Scienze di Base e Applicate per l'Ingegneria, Universit\`a di Roma ``La Sapienza'', Via Scarpa 12 - 00161 Roma, Italy, e-mail: {\sf angela.pistoia@uniroma1.it}.}
\author[Provenzano]{Luigi Provenzano}
\address{Dipartimento di Scienze di Base e Applicate per l'Ingegneria, Universit\`a di Roma ``La Sapienza'', Via Scarpa 12 - 00161 Roma, Italy, e-mail: {\sf luigi.provenzano@uniroma1.it}.}

\begin{abstract}
We consider the critical points of Steklov eigenfunctions on a compact, smooth $n$-dimensional Riemannian manifold $M$ with boundary $\partial M$. For generic metrics on $M$ we establish an identity which relates the sum of the indexes of a Steklov eigenfunction, the sum of the indexes of its restriction to $\partial M$, and the Euler characteristic of $M$. In dimension $2$ this identity gives a precise count of the interior critical points of a Steklov eigenfunction in terms of the Euler characteristic of $M$ and of the number of sign changes of $u$ on $\partial M$. In the case of the second Steklov eigenfunction on a genus $0$ surface, the identity holds for any metric. As a by-product of the main result, we show that for generic metrics on $M$ Steklov eigenfunctions are Morse functions in $M$.
\end{abstract}

\subjclass[2020]{58C40, 58J05, 58J20, 58J50}

\keywords{Steklov eigenfunctions; critical point theory; Morse function; generic properties.}

\maketitle

\section{Introduction}

Let $(M,g)$ be a smooth compact $n$-dimensional Riemannian manifold with non-empty, smooth boundary $\partial M$. We consider the Steklov eigenvalue problem, namely
\begin{equation}\label{Steklov}
\begin{cases}
\Delta u=0\,, & {\rm in\ } M\\
\partial_{\nu}u=\sigma u\,, & {\rm on\ }\partial M.
\end{cases}
\end{equation}
Here $\nu$ denotes the outer unit normal to $\partial M$ and $\partial_{\nu}u:=\langle\nabla u,\nu\rangle$ is the derivative of $u$ in the direction of $\nu$ along $\partial M$. This problem was introduced in \cite{steklov} at the beginning of the 20\textsuperscript{th} century, and has been intensively studied since then. Provided that the trace operator from $H^1(M)$ to $L^2(\partial M)$ is compact (as is the case of a smooth compact manifold with smooth boundary), problem \eqref{Steklov} admits an increasing sequence of non-negative eigenvalues of finite multiplicity
$$
0=\sigma_1<\sigma_2\leq\cdots\leq\sigma_k\leq\cdots\nearrow+\infty.
$$
The corresponding eigenfunctions are denoted by $u_1,u_2,\cdots$. In particular, $u_1$ is a constant function, and the eigenfunctions $\{u_k\}_{k=1}^{\infty}$ can be chosen such that their traces form an orthonormal basis of $L^2(\partial M)$. We also recall that the Steklov eigenvalues $\sigma_k$ can be interpreted as the eigenvalues of the Dirichlet-to-Neumann map $\mathscr D:H^{1/2}(\partial M)\to H^{-1/2}(\partial M)$, which acts as follows: $\mathscr D f=\partial_{\nu}(Hf)$, where $Hf$ is the harmonic extension of $f$ in $M$. It turns out that the eigenfunctions of $\mathscr D$ are the traces of the Steklov eigenfunctions $u_k$. Interested readers may find quite complete information on the Steklov problem in the survey \cite{GP_survey}, and in the more recent \cite{CGGS_survey}, along with a large number of open questions.

\medskip

In this paper we are interested in the geometry of the Steklov eigenfunctions, and in particular in the counting of the interior critical points. The literature on the geometry of the Steklov eigenfunctions has been mainly devoted to questions like the number and measure of nodal domains. This is not a surprise, since the study of nodal domains is perhaps one of the oldest topics in spectral geometry. In particular, the Courant's nodal domain theorem holds for Steklov eigenfunctions: $u_k$ has at most $k$ nodal domains (see e.g., \cite{KKGP_steklov}). However, the proof of Courant's nodal domain theorem cannot be generalized to the Dirichlet-to-Neumann eigenfunctions, i.e., the traces of the Steklov eigenfunctions on $\partial M$, and in fact it is an open problem to find an upper bound for the nodal count of Dirichlet-to-Neumann eigenfunctions. Bounds are available only in two dimensions as consequence of the interior nodal count and elementary topological arguments. For simply connected domains, see e.g., \cite{AM_steklov}. In dimension two, bounds on the number of nodal domains can be translated into bounds on the multiplicities of $\sigma_k$. The study of multiplicity bounds is another topic which has been extensively investigated in the last decades (see e.g., \cite{jammes_steklov,KKGP_steklov}, see also \cite[\S 6]{GP_survey} and \cite[\S 10]{CGGS_survey} for a more up-to-date account on multiplicity bounds and related open problems). Concerning critical points of eigenfunctions, much less is known. Most of the results are for simply connected domains in $\mathbb R^2$. The main reference is \cite{AM_steklov}, where the authors prove a number of bounds concerning the number of interior and boundary nodal domains, the eigenvalue multiplicity, and number of interior critical points of a Steklov eigenfunction. We also refer to \cite{AM_1992} where the authors consider more in general identities or inequalities relating the number and type of critical points of solutions to elliptic PDEs on domains of the plane, the topology of the domain, and the boundary data. We also mention \cite{PP} where the authors consider an overdetermined Steklov problem and prove, as a technical lemma, that for a simply connected planar domain, the second Steklov eigenfunction has no critical points. The proof can be adapted to any simply connected surface. Up to our knowledge, these are the only results on the counting of critical points of Steklov eigenfunctions.

\medskip

In our paper, we compute the number of interior critical points of $u_k$ in terms of the number of critical points of $u_k$ on $\partial M$ (or, to be more precise, the number of critical points of the corresponding Dirichlet-to-Neumann eigenfunction on $\partial M$). One of the main results is valid for {\it generic} metrics on a given smooth manifold $M$, and can be summarized as follows: for a generic metric, any Steklov eigenfunction $u$ satisfies
\begin{equation}\label{main1}
\sum_{\{x\in M:\nabla u(x)=0\}}{\ind_x}\nabla u=\chi(M)-\sum_{\{\xi\in \partial M:\bar\nabla u(\xi)=0,u(\xi)<0\}}{\ind_{\xi}}\bar\nabla u,
\end{equation}
where $\chi(M)$ is the Euler characteristic of $M$.
Here $\bar\nabla$ denotes the gradient on $\partial M$ with respect to the induced metric. By an abuse of notation, in the right-hand side of \eqref{main1} we have still denoted by $u$ the trace of $u$ (which in the smooth case is just the restriction of $u$ to $\partial M$). With the term {\it generic} we intend that the result holds for a dense subset of Riemannian metrics on the given manifold $M$. As a consequence, we show that \eqref{main1} implies 
\begin{equation}\label{main2}
\sum_{\{x\in M:\nabla u(x)=0\}}{\ind_x}\nabla u=\chi(M)-\sum_{i=1}^{\ell}\chi(\mathscr P_i).
\end{equation}
where $\mathscr P_i$ are the connected components of $\mathscr P=\{\xi\in\partial M:u(\xi)<0\}$.

\medskip

In order to prove \eqref{main1} we exploit the fact that, generically, a Steklov eigenfunction is Morse in $M$, its trace is Morse on $\partial M$ (i.e., Dirichlet-to-Neumann eigenfunctions are Morse functions), and its restriction on the boundary has no singular zeros (i.e., Dirichlet-to-Neumann eigenfunctions have no singular zeros). The last two generic statements have been recently proved in \cite{wang}. In fact, in \cite{wang} the author proves generic properties of Dirichlet-to-Neumann eigenfunctions. We prove in this paper that Steklov eigenfunctions are generically Morse in $M$ and, for completeness, we show also that, generically, there are no singular zeros in $M$.

\medskip

When $n=2$ (i.e., for Riemannian surfaces), formula \eqref{main2} implies that

\begin{equation}\label{main3}
\sharp\{{\rm critical\ points\ of\ }u{\rm\ in\ }M\}=\sum_{j=1}^L\ell_j-\chi(M),
\end{equation}
where $L$ is the number of connected components of $\partial M$ where $u$ changes sign exactly $\ell_j$ times on the $j$-th component. We remark that only the connected components of the boundary where $u$ changes sign influence the number of critical points of $u$. 

Finally, we prove that for surfaces of genus $0$, and for the second eigenfunction $u_2$, formula \eqref{main3} holds for any metric (not just generically). This is a consequence of the fact that the second eigenfunction is always a Morse function, along with other properties which we prove in the paper.

\medskip

We remark that for simply connected surfaces, formula \eqref{main3} implies that there are no interior critical points. This fact is known for simply connected planar domains by \cite{PP}, however the proof in \cite{PP} readily extends to the case of simply connected surfaces.

\medskip

We point out that the same results can be proved, using essentially the same arguments, for the \emph{weighted} Steklov problem
$$\begin{cases}
\Delta u=0\, & {\rm in\ } M\\
\partial_{\nu}u=\sigma\rho u\, & {\rm on\ }\partial M,
\end{cases}$$
where $\rho=\rho(x)>0$ is a strictly positive smooth function.\\
It would be interesting to investigate the same issues in the case of the \emph{sloshing} problem, namely when $\rho$ can equal zero in some portions of $\partial M$ (see for instance \cite{sloshing}).

\medskip

The present paper is organized as follows: in Section \ref{S0} we prove and identity which relates the sum of the indexes of an harmonic function $u$ on $M$, the sum of the indexes of its restriction on $\partial M$, and $\chi(M)$, which holds under suitable assumptions on $u$ (see Theorem \ref{T_0}). In section \ref{S1} we apply Theorem \ref{T_0} to Steklov eigenfunctions. In fact, for generic metrics, the hypotheses of Theorem \ref{T_0} are satisfied by Steklov eigenfunction. This allows to prove \eqref{main1} (see Theorem \ref{T_2}). In Section \ref{S1} we also obtain \eqref{main2} and \eqref{main3} as consequences of \eqref{main1}. In Section \ref{S2} we observe that \eqref{main3} holds for {\it any} Riemannian surface of genus $0$ when we consider the second Steklov eigenfunction. To prove the result, a crucial observation is that in this case the second Steklov eigenfunction is always a Morse function. Finally, in Appendix \ref{gen} we prove that for a generic metric on $M$, all Steklov eigenfunctions are Morse functions.

\section{A formula for the critical points of harmonic functions}\label{S0}

Let $(M,g)$ be a smooth, compact $n$-dimensional Riemannian manifold with smooth boundary $\partial M$. Through all the paper we shall denote by $\nabla,\Delta, D^2$ the gradient, Laplacian and Hessian on $M$ with respect to the given metric $g$, respectively, and by $\bar\nabla$ the gradient on $\partial M$ with respect to the induced metric. 

\medskip

Given a vector field $V$ on a smooth manifold $M$ with an isolated zero $x$, the {\it index} of $V$ at $x$, or $\ind_x V$, is defined as the local degree of the map $\Psi:\partial U\to\mathbb S^{n-1}$ given by $\Psi:=\frac{V}{|V|}$. Here we have fixed a system of local coordinates around $x$ and $U$ is a small coordinate neighborhood of $x$ which does not contain other zeros of $V$.

\medskip

The aim of this section is to prove the following theorem.

\begin{theorem}\label{T_0}
Let $(M,g)$ be a smooth compact Riemannian manifold with smooth boundary $\partial M$. Let $u$ be a smooth function on $M$ which satisfies $\Delta u=0$ in $M$, and $u_{|_{\partial M}}=h$. Assume moreover that
\begin{enumerate}[i)]
\item $u$ has isolated critical points in $M$;
\item $h$ has isolated critical points on $\partial M$;
\item $\nabla u\ne 0$ on $\partial M$.
\end{enumerate}
Then
\begin{equation}\label{E0}
\sum_{\{x\in M:\nabla u(x)=0\}}{\ind_x}\nabla u=\chi(M)-\sum_{\{\xi\in\partial M:\bar\nabla h(\xi)=0,\partial_{\nu}u(\xi)<0\}}{\ind_{\xi}}\bar\nabla h.
\end{equation}
\end{theorem}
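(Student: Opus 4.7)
My plan is to derive \eqref{E0} by applying the classical Poincar\'e--Hopf theorem for manifolds with boundary to the vector field $V := \nabla u$. Conditions (i)--(iii) match exactly what this theorem requires: $V$ has isolated zeros in the interior, is nowhere zero on $\partial M$, and its tangential component on $\partial M$ is $\bar\nabla h$, which has isolated zeros. Harmonicity of $u$ plays no essential role in this argument; it is stated because the eventual target is Steklov eigenfunctions.

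The concrete step is to build an auxiliary vector field $\tilde V$ on $M$ that agrees with $\nabla u$ outside a thin collar $C \simeq \partial M \times [0, 2\epsilon)$ of $\partial M$ but points strictly outward on $\partial M$. Fix a smooth cutoff $\phi(t)$ with $\phi(0) = 1$ and $\phi \equiv 0$ for $t \geq \epsilon$, and a smooth function $\psi : \partial M \to \mathbb{R}$ with $\psi > |\partial_\nu u| + 1$. After extending $\nu$ along $t$-lines in the collar, set
$$\tilde V := \nabla u + \phi(t)\psi(\xi)\nu \text{ in } C, \qquad \tilde V := \nabla u \text{ elsewhere}.$$
Then $\tilde V \cdot \nu = \partial_\nu u + \psi > 0$ on $\partial M$, so the classical outward-pointing Poincar\'e--Hopf theorem gives $\sum_{\tilde V(y) = 0} \ind_y \tilde V = \chi(M)$.

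Next I would match the zeros of $\tilde V$ with the two sums in \eqref{E0}. Choosing $\epsilon$ small enough (using (iii) and compactness) guarantees $\nabla u \neq 0$ on $\overline C$, so outside $C$ the zeros of $\tilde V$ are exactly the interior critical points of $u$, with the same indices. Inside $C$, decomposing $\nabla u = T(\xi, t) + N(\xi, t)\nu$ with $T$ tangential, a zero of $\tilde V$ requires $T = 0$ and $N + \phi\psi = 0$. For small $\epsilon$, $T$ can vanish only near boundary critical points $\xi_0$ of $h$, at which by (iii) $\partial_\nu u(\xi_0) \neq 0$. If $\partial_\nu u(\xi_0) > 0$, then $N + \phi\psi > 0$ throughout $C$ near $\xi_0$ and no new zero appears; if $\partial_\nu u(\xi_0) < 0$, then $N + \phi\psi$ is positive at $t = 0$ and negative at $t = \epsilon$, so new zeros of $\tilde V$ emerge near $\xi_0$. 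A Jacobian computation (or, equivalently, a homotopy to a block-diagonal model by choosing $\psi$ large so that $\partial_t(N + \phi\psi)$ dominates the off-diagonal entries) shows that the total local index of $\tilde V$ near each such $\xi_0$ equals $\ind_{\xi_0}\bar\nabla h$. Summing and rearranging yields \eqref{E0}.

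The main obstacle is this final index calculation, especially at boundary critical points where $\bar\nabla h$ is degenerate. In the nondegenerate case the implicit function theorem produces a unique curve of zeros of $T$ through $(\xi_0, 0)$ and reduces the Jacobian of $\tilde V$ at the new zero to a block-triangular form with diagonal blocks $\text{Hess}\, h$ and $\partial_t(N+\phi\psi)$, whose signed determinants multiply to $\ind_{\xi_0}\bar\nabla h$. The degenerate case is handled either by a $C^\infty$-small perturbation of $h$ to a Morse function, with both sides of \eqref{E0} stable under such perturbations, or by invoking directly the homotopy invariance of the local degree of $\tilde V / |\tilde V|$ on a small sphere around $(\xi_0, 0)$.
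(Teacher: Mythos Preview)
Your approach is correct but genuinely different from the paper's. The paper passes to the \emph{double} $\mathscr{D}M$: it constructs a $C^1$ extension $\tilde u$ of $u$ to the closed manifold $\mathscr{D}M$ (by reparametrising in the normal direction so that $\partial_t\tilde u|_{\partial M}=0$ and reflecting), applies the closed-manifold Poincar\'e--Hopf to $\nabla\tilde u$, and then combines with Poincar\'e--Hopf for $\bar\nabla h$ on $\partial M$ together with $\chi(\mathscr{D}M)=2\chi(M)-\chi(\partial M)$. In that setting the new critical points sit exactly on $\partial M$, and their index is read off from whether $\tilde u$ has a normal minimum or maximum there, which is governed by the sign of $\partial_\nu u$; no collar index computation is needed.

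Your route stays on $M$: you perturb $\nabla u$ in a collar so that it points outward on $\partial M$, apply the boundary version of Poincar\'e--Hopf directly, and then identify the extra zeros produced in the collar with the boundary critical points where $\partial_\nu u<0$. This is essentially a proof of the Morse--Pugh style generalisation of Poincar\'e--Hopf for vector fields that are not outward pointing on the whole boundary; the paper in fact cites this generalisation (their Theorem~\ref{morse}, from \cite{MM}) and remarks that it yields the result as well, but regards the doubling proof as simpler and shorter. The trade-off is exactly the one you flag: your approach avoids introducing an auxiliary manifold, but the local index identification near a (possibly degenerate) boundary critical point requires a genuine homotopy/degree argument, whereas the doubling trick reduces that step to a one-line sign check. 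Both arguments use only (i)--(iii); your observation that harmonicity is not needed for \eqref{E0} is correct and is equally true of the paper's proof.
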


A crucial role in the proof of Theorem \ref{T_0} is played by the Poincaré-Hopf Theorem.

\begin{theorem}[Poincaré-Hopf]\label{PH}
Let $M$ be an orientable $n$-dimensional Riemannian manifold, and let $V$ be a smooth vector field on $M$ with isolated zeros $x_i$. If $M$ has a boundary, assume that $\langle V,\nu\rangle>0$, where $\nu$ is the outer unit normal to $\partial M$. Then
$$
\sum_{i}{\ind_{x_i}}V=\chi(M).
$$
\end{theorem}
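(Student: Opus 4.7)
The plan is to reduce Poincar\'e--Hopf to a concrete model via homotopy invariance of the sum of indices. I would first show that, among smooth vector fields on $M$ with isolated zeros in the interior and $\langle V, \nu\rangle > 0$ on $\partial M$, the quantity $\sum_x \ind_x V$ depends only on $M$. Given two such fields $V_0, V_1$, the linear homotopy $V_t = (1-t)V_0 + tV_1$ preserves the outward-pointing condition because the outward half-space at each boundary point is convex, and consequently no zero trajectory can escape through $\partial M$. A generic perturbation of the homotopy, together with parametric transversality, makes the zero locus $\{(x,t)\in M^\circ \times [0,1] : V_t(x) = 0\}$ a compact oriented one-dimensional submanifold whose boundary consists of the signed zero sets at $t=0$ and $t=1$; the usual cobordism argument then gives $\sum_x \ind_x V_0 = \sum_x \ind_x V_1$.

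Next I would compute the sum for a specific vector field, taking $V = \nabla f$ for a function $f : M \to \mathbb{R}$ which is Morse in the interior and satisfies $\partial_\nu f > 0$ on $\partial M$. Such an $f$ is built by prescribing $f = -d(\cdot, \partial M)$ in a collar neighborhood of $\partial M$, smoothed near the inner edge, and extended to a Morse function in the interior. At each Morse critical point $p$ one has $\ind_p \nabla f = (-1)^{\mathrm{ind}_{\mathrm{Morse}}(f,p)}$, and the outward-gradient condition ensures that the sublevel-set filtration of $f$ produces a handle decomposition of $M$ with exactly one $k$-handle per interior critical point of Morse index $k$, whence $\chi(M) = \sum_p (-1)^{\mathrm{ind}_{\mathrm{Morse}}(f,p)}$. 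Applying the invariance of Step~1 to $V_0 = V$ and $V_1 = \nabla f$ yields $\sum_x \ind_x V = \chi(M)$.

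The main obstacle is the transversality content of the first step: along the homotopy the zero set may split, merge, or momentarily become higher-dimensional, and individual zeros cannot be tracked continuously. The standard remedy is to regard $V_t$ as a smooth section of the pullback of $TM$ along $M \times [0,1] \to M$ and invoke parametric transversality to reduce to the generic case where the zero locus is a smooth compact $1$-manifold; the convexity of the outward half-space prevents escape through $\partial M$, and the oriented cobordism count at the two ends coincides with the two index sums being compared. A secondary technical point is that the handle-decomposition step in the Morse computation relies on the outward-gradient condition to control the behaviour of $f$ near $\partial M$, but this is standard Morse theory for manifolds with boundary.
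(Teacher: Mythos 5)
The paper does not prove this statement at all: Theorem \ref{PH} is the classical Poincar\'e--Hopf theorem, quoted as a known tool and used as a black box in the proof of Theorem \ref{T_0}, so there is no in-paper argument to compare against. Your proposal is a correct outline of one of the standard textbook proofs (homotopy invariance of the index sum via an oriented cobordism of zero loci, followed by evaluation on the gradient of a Morse function adapted to the boundary), and both halves are sound: the convexity of the outward half-space does keep the zero locus of $V_t$ in a compact subset of the interior, and a Morse function with $\partial M$ as a regular top level set does give $\chi(M)=\sum_p(-1)^{\mathrm{ind}_{\mathrm{Morse}}(f,p)}$ with $\ind_p\nabla f=(-1)^{\mathrm{ind}_{\mathrm{Morse}}(f,p)}$.

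One point you should make explicit in Step~1: the theorem only assumes the zeros of $V_0$ and $V_1$ are \emph{isolated}, not nondegenerate, so the zero locus of the perturbed homotopy need not be a manifold-with-boundary near $t=0$ and $t=1$ if you perturb rel endpoints, while if you perturb the endpoints you must check the index sums are unchanged. The standard fix is a preliminary local modification: replace $V_i$ inside small disjoint balls around its zeros by a vector field with only nondegenerate zeros, agreeing with $V_i$ near the boundary spheres of those balls; the local degree of $V_i/|V_i|$ on each such sphere is unchanged, and it equals the sum of the indices of the nondegenerate zeros created inside, so the total index sum is preserved. With that reduction in place, parametric transversality applied rel endpoints gives the compact oriented $1$-dimensional cobordism you describe, and the rest of your argument goes through. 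This is a routine repair, not a flaw in the strategy.
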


\begin{proof}[\bf Proof of Theorem \ref{T_0}]
The proof is divided into two step. In the first step we consider the manifold $M$ and take its {\it double}, which is a closed manifold. Starting from $u$, we then define an auxiliary function $\tilde u$ on the double manifold and we relate its critical points to the critical points of $u$ on $M$ and of $h$ on $\partial M$. In the second step we apply the Poincaré-Hopf Theorem to the function $\tilde u$.

{\em The double manifold and the auxiliary function.} We consider the {\it double} of $M$, $\mathscr DM=M\cup_{\mathrm Id}M$, where ${\mathrm Id}:\partial M\to\partial M$ is the identity map of $\partial M$: it is obtained from $M\cup M$ by identifying each boundary point in one copy of $M$ with the same boundary point in the other. We can give a smooth structure on $\mathscr DM$ using a tubular neighborhood of the boundary (which exists since the boundary is at least $C^2$) to define smooth charts in a neighborhood of the gluing (see e.g., \cite[\S 9]{lee} for more details). We have that $\partial M$ is a smooth submanifold of $\mathscr DM$. For $\rho>0$ small, we denote by $\omega_{\rho}$ a tubular neighborhood of $\partial M$ in $\mathscr DM$, namely $\omega_{\rho}:=\{x\in\mathscr DM:{\rm dist}(x,\partial M)<\rho\}$ ($\rho$ is chosen small enough such that any point in $\omega_{\rho}$ has a unique nearest point on $\partial M$).

\medskip

Consider now the smooth function $u$. We define $\tilde u$ on $\mathscr DM$ such that $\tilde u\equiv u$ on $\mathscr DM\setminus\overline{\omega_{\rho}}$. Let $\nu\in T\mathscr DM$ be a unit vector field normal to the submanifold $\partial M$. We define $\Phi:\partial M\times(-\rho,\rho)\to\omega_{\rho}$ as
\begin{equation}\label{fermi}
\Phi(\xi,t):={\rm Exp}_{\xi}(t\nu(\xi)),
\end{equation}
where $\xi\in\partial M$, $t\in(-\rho,\rho)$ and ${\rm Exp}$ is the exponential map. The coordinates $(\xi,t)$ are usually called {\it Fermi} coordinates. We define, when $t\in[0,\rho)$,
$$
\tilde u(\Phi(\xi,t)):=u(\Phi(\xi,\phi(t)))
$$
for $\phi:[0,\rho]\to[0,\rho]$ that satisfies $\phi'(0)=\phi(0)=0$, $\phi'(\rho)=1$, $\phi(\rho)=\rho$, $\phi'>0$ on $(0,\rho)$. This is achieved e.g. for $\phi(t)=\frac{2t^2}{\rho}-\frac{t^3}{\rho^2}$. When $t\in(-\rho,0]$, we define $\tilde u$ by even reflection in the direction of $\nu$. It is easy to check that $\tilde u$ is $C^1$ on $\mathscr DM$.

\medskip

Let us compute now the gradient of $\tilde u$ in $\omega_{\rho}$. Since $\tilde u$ is even with respect to reflections through $\partial M$, for simplicity we consider the points $x\in\omega_{\rho}$ of the form $x=\Phi(\xi,t)$ for $t\in(0,\rho)$.

 We have that $\nabla \tilde u=\partial_t\tilde u\, \nu+\bar\nabla_t\tilde u$, where $\bar\nabla_t$ denotes the gradient for the induced metric on the parallel to $\partial M$ at distance $\phi(t)$ from $\partial M$. We check that
\begin{equation}\label{grad_fermi}
\partial_t\tilde u(\Phi(\xi,t))=\phi'(t)\partial_{\nu}u(\Phi(\xi,\phi(t)))\,,\ \ \ \bar\nabla_t\tilde u(\Phi(\xi,t))=\bar\nabla\tilde u(\Phi(\xi,t))+O(\phi(t)).
\end{equation}
We still denote here by $\nu$ the natural extension of the normal vector field to $\omega_{\rho}$ (i.e., $\nu$ is a unit vector field normal to all parallels to $\partial M$ in $\omega_{\rho}$).

We look now for critical points of $\tilde u$ in $\omega_{\rho}$. From the assumption $iii)$, we deduce that we can choose $\rho$ small enough such that $\nabla u\ne 0$ in $\omega_{\rho}$. Hence, to have both components of the gradient equal to zero, we deduce from \eqref{grad_fermi} that necessarily $t=0$, hence $\bar\nabla\tilde u(\Phi(\xi,0))=0$, which is just $\bar\nabla h=0$. Therefore we conclude that the critical points of $\tilde u$ are given by the union of the interior critical points of $u$ (on each copy of $M$ in the double manifold) and of the critical points of $h$ on $\partial M$.

{\em The Poincaré-Hopf Theorem.} We apply Theorem \ref{PH} to the vector field $\nabla\tilde u$ on the closed manifold $\mathscr DM$. Since by $i)$ and $ii)$ both $u$ and $h$ have isolated critical points (on $M$ and $\partial M$, respectively), the function $\tilde u$ also has isolated critical points on $\mathscr DM$ and so 
\begin{equation}\label{s1}
\begin{aligned} 
\chi\(\mathscr D M\)=&\sum_{\{x \in\omega_{\rho}:\nabla\tilde u(x )=0\}} {\ind_x \nabla\tilde u}+\sum_{\{x \in\mathscr DM\setminus\overline{\omega}_{\rho}\,:\nabla\tilde u(x )=0\}} {\ind_x \nabla\tilde u}\\
=&\sum_{\{x \in\omega_{\rho}:\nabla\tilde u(x )=0\}} {\ind_x \nabla\tilde u}+2\sum_{\{x \in M:\nabla u(x )=0\}} {\ind_x \nabla u}.
\end{aligned} 
\end{equation}

We have seen that the critical points in $\omega_{\rho}$ (if $\rho$ is chosen sufficiently small) actually belong to the submanifold $\partial M$ and they are exactly the same critical points of $h$. However, their index may change, since each can be either a minimum or a maximum in the normal direction to $\partial M$. Let $\xi\in\partial M$ be a critical point for $h$. Note that the sign of the second normal derivative of $\tilde u$ at a critical point $x=(\xi,0)$ is exactly that of $-\partial_{\nu}u(\xi,0)$, hence

$$\begin{aligned}
 \partial_\nu u(\xi,0 )>0\ &\Rightarrow\ 
\ind_x \nabla \tilde u= -\ind_\xi \bar\nabla h\\
 \partial_\nu u(\xi,0 ) <0\ &\Rightarrow\ 
\ind_x \nabla \tilde u= \ind_\xi \bar\nabla h\\
\end{aligned}.$$
To simplify the notation, from now on we shall denote a point of Fermi coordinates $(\xi,0)$ just by $\xi$ (these are the points on $\partial M\subset\mathscr DM$). We have then
\begin{equation}\label{s2}\begin{aligned} 
\sum_{\{x \in\omega_{\rho}:\nabla\tilde u(x )=0\}} {\ind_x \nabla\tilde u}
=&\sum_{\{\xi \in\partial M:\bar\nabla h(\xi )=0,\, \partial_\nu u(\xi)<0\}} {\ind_\xi \bar\nabla h}\\ &-\sum_{\{\xi \in\partial M:\bar\nabla h(\xi )=0,\, \partial_\nu u(\xi)>0\}} {\ind_\xi \bar\nabla h}.
\end{aligned} \end{equation}

On the other hand, applying again the Poincaré-Hopf Theorem, this time to $\bar\nabla h$ on $\partial M$, and taking into account that
for any $\xi $ with $\bar\nabla h(\xi )=0$ it holds true that $\partial_\nu u(\xi)\not=0$ by $iii)$, we get
\begin{equation}\label{s3}\begin{aligned} 
 \chi\(\partial M\) =& \sum_{\{\xi \in\partial M:\bar\nabla h(\xi )=0\}} {\ind_\xi \bar\nabla h}
 \\ =&\sum_{\{\xi \in\partial M:\bar\nabla h(\xi )=0,\, \partial_\nu u(\xi,0 )>0\}} {\ind_\xi \bar\nabla h}+\sum_{\{\xi \in\partial M:\bar\nabla h(\xi )=0,\, \partial_\nu u(\xi )<0\}} {\ind_\xi \bar\nabla h}.
\end{aligned} \end{equation}
Finally by summing \eqref{s1}, \eqref{s2} and \eqref{s3}, we get
\begin{equation}\label{s4}\begin{aligned} 
 \sum_{\{x \in M:\nabla u(x)=0\}} {\ind_x \nabla u}=\frac12\left( \chi\(\mathscr DM\)+\chi(\partial M)\right)-\sum_{\{\xi \in\partial M:\bar\nabla h(\xi )=0,\, \partial_\nu u(\xi)<0\}} {\ind_\xi \bar\nabla h}.
\end{aligned} 
\end{equation}
Using
$$ 
\chi(\mathscr DM)=2\chi(M)-\chi(\partial M)
$$
in \eqref{s4}, we establish \eqref{E0}. The proof is now concluded.
\end{proof}

We point out that if both $u$ and $h$ are {\it Morse} functions, formula \eqref{E0} can be made more precise. In fact, in this case, the index of a non-degenerate critical point $x$ is $(-1)^{\mathtt m(x)}$ where $\mathtt m(x)$ is the Morse index of $x$, and formula \eqref{E0} reads as
\begin{equation}\label{E1}\begin{aligned} 
 \sum_{\{x \in M:\nabla u(x )=0\}} {(-1)^{\mathtt m(x)}}= \chi(M)-\sum_{\{\xi \in\partial M:\bar\nabla h(\xi )=0,\, \partial_\nu u(\xi )<0\}} {(-1)^{\mathtt m(\xi)}}
\end{aligned} \end{equation}
 
In particular, when $n=2$ (i.e., for Riemannian surfaces), taking into account that all the critical points in the interior are saddle points and on the boundary are maxima or minima, we deduce the following corollary
\begin{corollary}
Under the assumptions of Theorem \ref{T_0} we have that
$$\begin{aligned}\sharp \{\hbox{critical points of $u$}\}=&\ \ \ \sharp \{\hbox{minima of $u$ on $\partial M$ with $ \partial_\nu u(\xi )<0$}\}\\ &-\sharp \{\hbox{maxima of $u$ on $\partial M$ with $ \partial_\nu u(\xi )<0$}\}\\ &-\chi(M).\end{aligned}
$$ 
\end{corollary}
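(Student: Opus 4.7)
The plan is to specialize the Morse-refined identity \eqref{E1} to dimension $n=2$ and classify the possible Morse indices that occur on the two sides of the formula. Since the corollary counts critical points without weights, the implicit hypothesis (made explicit in the preceding paragraph) is that $u$ and $h$ are Morse, so that \eqref{E1} applies and each index is $\pm 1$.

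First I would handle the interior sum. Since $u$ is harmonic on a two-dimensional manifold, the maximum principle forbids strict local extrema at interior points; equivalently, if $x$ is a non-degenerate interior critical point, then $D^2u(x)$ is invertible with $\mathrm{tr}\,D^2u(x)=\Delta u(x)=0$, so its two eigenvalues are real and of opposite sign. Therefore every interior non-degenerate critical point is a saddle with Morse index $\mathtt{m}(x)=1$, and the left-hand side of \eqref{E1} equals
\[
\sum_{\{x\in M:\nabla u(x)=0\}}(-1)^{\mathtt m(x)}=-\sharp\{\text{critical points of }u\}.
\]

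Next I would handle the boundary sum. Since $\partial M$ is one-dimensional, a non-degenerate critical point $\xi$ of $h$ is either a local minimum (with $\mathtt{m}(\xi)=0$, contributing $+1$) or a local maximum (with $\mathtt{m}(\xi)=1$, contributing $-1$). Restricting to those $\xi$ where additionally $\partial_\nu u(\xi)<0$, the sum on the right-hand side of \eqref{E1} becomes
\[
\sharp\{\text{minima of }u\text{ on }\partial M\text{ with }\partial_\nu u(\xi)<0\}-\sharp\{\text{maxima of }u\text{ on }\partial M\text{ with }\partial_\nu u(\xi)<0\}.
\]

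Substituting both expressions into \eqref{E1} and solving for the number of interior critical points yields the claimed identity. There is no substantive obstacle: the two main observations — that harmonic saddles are the only interior Morse critical points in dimension two, and that Morse critical points on a $1$-manifold are maxima or minima — are elementary, and the rest is bookkeeping inside \eqref{E1}.
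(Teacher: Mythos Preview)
Your proof is correct and matches the paper's own argument essentially line for line: the paper also derives the corollary from \eqref{E1} under the implicit Morse assumption stated just before the corollary, observing that in dimension two all interior critical points of a harmonic function are saddles and all boundary critical points on a $1$-manifold are maxima or minima. Your write-up is in fact slightly more detailed (you justify the saddle claim via $\mathrm{tr}\,D^2u=0$ rather than by the maximum principle alone), but the approach is the same.
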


\section{A generic formula for the critical points of Steklov eigenfunctions}\label{S1}

In this section we apply Theorem \ref{T_0} to the case of Steklov eigenfunctions and prove that for generic metrics on a given manifold, formula \eqref{main1} holds, along with its consequences (formulas \eqref{main2} and \eqref{main3}). 

\medskip

\begin{theorem}\label{T_2}
Let $M$ be a smooth compact $n$-dimensional manifold with smooth boundary $\partial M$ and let $G$ be the set of smooth Riemannian metrics on $M$. Then for a residual (hence dense) subset of $G$ of smooth metrics we have that non-constant Steklov eigenfunctions $u$ satisfy
\begin{equation}\label{E2_1}
\sum_{\{x\in M:\nabla u(x)=0\}}{\ind_x}\nabla u=\chi(M)-\sum_{\{\xi\in\partial M:\bar\nabla u(\xi)=0, u(\xi)<0\}}{\ind_{\xi}}\bar\nabla u.
\end{equation}
Moreover, if
$$
\mathscr P:=\{\xi\in\partial M: u(\xi)<0\}
$$
we have
\begin{equation}\label{E2_2}
\sum_{\{x\in M:\nabla u(x)=0\}}{\ind_x}\nabla u=\chi(M)-\chi(\mathscr P).
\end{equation}
\end{theorem}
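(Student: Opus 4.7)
The plan is to derive \eqref{E2_1} as a direct application of Theorem \ref{T_0} after verifying its three hypotheses generically for non-constant Steklov eigenfunctions, and then to obtain \eqref{E2_2} through a second, independent application of the Poincaré--Hopf theorem to the vector field $\bar\nabla u$ on the negativity set $\mathscr P$.

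First I would identify the three generic conditions that must hold for a non-constant Steklov eigenfunction $u$, corresponding to hypotheses (i), (ii), (iii) of Theorem \ref{T_0}: that $u$ is Morse in $M$, that its trace $u|_{\partial M}$ is Morse on $\partial M$, and that $\nabla u$ does not vanish anywhere on $\partial M$. The first is proved in Appendix \ref{gen}; the second is one of the generic properties of Dirichlet-to-Neumann eigenfunctions established in \cite{wang}. The third can be reduced to another result from \cite{wang}: using the orthogonal decomposition $\nabla u = \bar\nabla u + (\partial_\nu u)\nu$ on $\partial M$ and the Steklov boundary condition $\partial_\nu u = \sigma u$ with $\sigma>0$ (valid since $u$ is non-constant), the vanishing of $\nabla u$ at a boundary point $\xi$ is equivalent to $u(\xi)=0$ together with $\bar\nabla u(\xi)=0$, i.e., to $\xi$ being a singular zero of the trace $u|_{\partial M}$; the generic absence of such singular zeros is proved in \cite{wang}. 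Taking the countable intersection of these residual subsets (over all eigenfunctions) produces a residual, hence dense, set of metrics on which Theorem \ref{T_0} applies to every non-constant eigenfunction. The boundary sum in \eqref{E0} can then be rewritten using $\partial_\nu u=\sigma u$ with $\sigma>0$: the condition $\partial_\nu u(\xi)<0$ becomes $u(\xi)<0$, $\bar\nabla h=\bar\nabla u$, and \eqref{E2_1} follows.

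To deduce \eqref{E2_2}, I would apply Theorem \ref{PH} to the tangential gradient $\bar\nabla u$ on $\overline{\mathscr P}\subset\partial M$. By the generic absence of singular zeros of the trace, the set $\{u=0\}\cap\partial M$ is a smooth embedded hypersurface of $\partial M$, so $\overline{\mathscr P}$ is a smooth compact manifold with boundary $\partial\mathscr P$. On $\partial\mathscr P$ the function $u$ vanishes while $u<0$ in the interior of $\mathscr P$, so $\bar\nabla u$ is nonzero on $\partial\mathscr P$ and points in the direction of the outer unit normal to $\mathscr P$ within $\partial M$; the zeros of $\bar\nabla u$ in $\overline{\mathscr P}$ are precisely the points $\xi$ with $\bar\nabla u(\xi)=0$ and $u(\xi)<0$, which are isolated by the generic Morse property. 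Theorem \ref{PH} then yields
\[
\sum_{\{\xi\in\partial M:\bar\nabla u(\xi)=0,\,u(\xi)<0\}}\ind_\xi\bar\nabla u \;=\; \chi(\mathscr P),
\]
and substituting this identity into \eqref{E2_1} gives \eqref{E2_2}.

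The main technical obstacle I anticipate is the verification of hypothesis (iii) — bridging the intrinsic no-singular-zeros statement for the trace (a property of $u|_{\partial M}$ alone) with the nonvanishing of the full gradient $\nabla u$ on $\partial M$. It is precisely here that the Steklov eigenvalue relation enters in a non-trivial way, distinguishing these functions from arbitrary harmonic functions; and, once this is in hand, it also guarantees that $\overline{\mathscr P}$ is a genuine smooth manifold with boundary to which Theorem \ref{PH} can be applied cleanly.
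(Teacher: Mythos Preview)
Your proposal is correct and follows essentially the same route as the paper: verify hypotheses (i)--(iii) of Theorem~\ref{T_0} generically via Appendix~\ref{gen} and \cite{wang}, use the Steklov relation $\partial_\nu u=\sigma u$ (with $\sigma>0$) both to translate the no-singular-zeros statement into hypothesis~(iii) and to rewrite the boundary condition $\partial_\nu u(\xi)<0$ as $u(\xi)<0$, and then apply Poincar\'e--Hopf (Theorem~\ref{PH}) to $\bar\nabla u$ on $\mathscr P$ to obtain \eqref{E2_2}. Your explicit mention of the countable intersection of residual sets is a small clarification the paper leaves implicit, but otherwise the arguments coincide.
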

Before proving Theorem \ref{T_2}, we discuss a few consequences.

\medskip

Assume that $\mathscr P$ is the union of $\ell$ disjoint connected components $\mathscr P_i$. Then by \eqref{E2_2} we get
\begin{equation}\label{E2_3}
\sum_{\{x\in M:\nabla u(x)=0\}}{\ind_x}\nabla u=\chi(M)-\sum_{i=1}^{\ell}\chi(\mathscr P_i).
\end{equation}
Since $u$ and its restriction on $\partial M$ are generically Morse (see \cite{wang} and Theorem \ref{T_gen}), we have from \eqref{E2_1} that
$$ \sum_{\{x \in M:\nabla u(x )=0\}} {(-1)^{\mathtt m(x)}}= \chi(M)-\sum_{\{\xi \in\partial M:\bar\nabla u(\xi )=0,\, u(\xi )<0\}} {(-1)^{\mathtt m(\xi)}}
.
$$

\medskip

If $n=2$ (i.e., for Riemannian surfaces), each $\mathscr P_i$ is either an interval (hence $\chi(\mathscr P_i)=1$) or a simple closed curve (hence $\chi(\mathscr P_i)=0$). Therefore we deduce that, generically
\begin{equation}\label{E2_4}
\sum_{\{x\in M:\nabla u(x)=0\}}{\ind_x}\nabla u=\chi(M)-\sum_{j=1}^{L}\ell_j,
\end{equation}
where $L$ is the number of connected components $\Gamma_j$ of $\partial M$ where $u$ changes sign and $\ell_j$ is the number of times that $u$ changes sign on $\Gamma_j$. Moreover, we also have
$$\begin{aligned}\sharp \{\hbox{critical points of $u$}\}=&\ \ \ \sharp \{\hbox{minima of $u$ on $\partial M$ with $ u(\xi )<0$}\}\\ &-\sharp \{\hbox{maxima of $u$ on $\partial M$ with $ u(\xi )<0$}\}\\ &-\chi(M)\\
 =& \sum_{j=1}^L\ell_j-\chi(M).\end{aligned}$$

\begin{proof}[\bf Proof of Theorem \ref{T_2}]
We apply Theorem \ref{T_0} to a non-constant Steklov eigenfunction $u$. To do so, we verify that hypotheses $i)$, $ii)$ and $iii)$ of Theorem \ref{T_0} are satisfied by $u$ and $h=u_{|_{\partial M}}$ for a generic metric $g$ on $M$. In \cite{wang} it is proved that, generically, restrictions of Steklov eigenfunctions on $\partial M$ are Morse functions, hence $ii)$ is verified. In \cite{wang} it is also proved that generically there are no singular zeros for $u_{|_{\partial\Omega}}$, which means that if $u(\xi)=0$ for some $\xi\in\partial M$, then $\bar\nabla u(\xi)\ne 0$. Since $u(\xi)=0$ if and only if $\partial_{\nu}(\xi)=0$ by the Steklov condition, we deduce that if $\partial_{\nu}u(\xi)=0$, then $\bar\nabla u(\xi)\ne 0$. Hence also $iii)$ is generically satisfied. On the other hand, in \cite{wang} generic properties of Steklov eigenfunctions in $M$ are not discussed. We prove in Appendix \ref{gen} (see Theorem \ref{T_gen}) that $u$ is generically a Morse function in $M$, which implies $i)$. Therefore Theorem \ref{T_0} applies, and since $\partial_{\nu}u(\xi)<0$ for $\xi\in\partial\Omega$ if and only if $u(\xi)<0$ by the Steklov condition, we deduce \eqref{E2_1} from \eqref{E0}.

\medskip

In order to prove \eqref{E2_2}, we use Poincaré-Hopf Theorem on $\mathscr P$, which is the union of $n-1$-dimensional connected manifolds $\mathscr P_i$, possibly with boundary. If $\mathscr P_i$ has boundary (which is a zero level set of $u_{|_{\partial M}}$), from $iii)$ we deduce that generically $\bar\nabla u\ne 0$ on $\partial\mathscr P_i$, which implies that $\langle\bar\nabla u,N\rangle>0$, where $N$ is the outer unit normal to $\mathscr P_i$. Hence we can apply Theorem \ref{PH} and get that
\begin{equation}\label{PH_dim}
\sum_{\{\xi\in\partial M:\bar\nabla u(\xi)=0,u(\xi)<0\}}{\ind_{\xi}}\bar\nabla u=\sum_{i=1}^{\ell}\chi(\mathscr P_i)=\chi(\mathscr P).
\end{equation} 
Now \eqref{E2_2} follows from \eqref{E2_1} and \eqref{PH_dim}.
\end{proof}

\section{The case of the second eigenfunction}\label{S2}

In this section we consider the second Steklov eigenfunction $u_2$ on {\it any} smooth connected Riemannian surface $M$ of genus $0$, and we prove that \eqref{E2_4} holds (i.e., the result is not generic). To do so we prove a few properties of $u_2$ when $n=2$. Note the result applies, in particular, to Euclidean domains. Throughout all this section, $M$ is a genus $0$ surface with boundary.

\begin{theorem}\label{u2_morse}
The function $u_2$ is a Morse function.
\end{theorem}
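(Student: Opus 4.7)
My plan is to verify that on a genus-$0$ surface $M$, the second Steklov eigenfunction $u_2$ satisfies all the non-degeneracy properties needed so that Theorem~\ref{T_0} applies in its Morse form \eqref{E1}: (i) $\nabla u_2\neq 0$ on $\partial M$, (ii) every interior critical point of $u_2$ is non-degenerate, and (iii) every critical point of $u_2|_{\partial M}$ is non-degenerate. The global input is Courant's nodal domain theorem which, together with $\int_{\partial M}u_2=0$ (orthogonality with the constant eigenfunction $u_1$) and harmonicity, forces $u_2$ to have exactly two nodal domains $\Omega^\pm:=\{\pm u_2>0\}$ separated by the nodal set $\mathcal N:=\{u_2=0\}$. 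The genus-$0$ hypothesis enters through an Euler characteristic count for the nodal graph $\mathcal N\cup\partial M$ in $M$.

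For (i), if $\xi_0\in\partial M$ satisfies $\nabla u_2(\xi_0)=0$, then by the Steklov condition $\partial_\nu u_2=\sigma_2 u_2$ we also have $u_2(\xi_0)=0$, so $u_2$ vanishes to some order $k\ge 2$ at $\xi_0$. Straightening $\partial M$ locally via Fermi coordinates and writing the leading-order expansion in the upper-half-plane model as $u_2\sim r^k h(\theta)$, harmonicity gives $h''+k^2h=0$ while the Steklov condition, equated order-by-order in $r$, reduces at leading order to the Neumann-type conditions $h'(0)=h'(\pi)=0$. This forces $h(\theta)\propto\cos(k\theta)$, producing $k\ge 2$ nodal branches of $u_2$ emanating from $\xi_0$ into $M$, hence $k+1\ge 3$ local sign-sectors at $\xi_0$. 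An analogous interior computation (without boundary constraints) shows that any interior singular zero of order $k$ gives $2k\ge 4$ alternating sectors.

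Such extra branches are then ruled out by an Euler characteristic count on the nodal graph. Regarding $\mathcal N\cup\partial M$ as a CW-decomposition of $M$ whose $2$-faces are exactly the nodal domains, one has $\chi(M)=V-E+F$ with $F=2$ by Courant and $\chi(M)=2-b$ ($b$ the number of boundary components). Every singular zero increases the local degree at its vertex, and a direct computation shows this produces a value of $V-E$ incompatible with $2-b-F=-b$. This rules out both boundary and interior singular zeros. For (ii) and (iii) at a critical point where $u_2\ne 0$, a degenerate critical point at $x_0$ yields $u_2-u_2(x_0)\sim\mathrm{Re}((z-z_0)^k)$ with $k\ge 3$, producing a $k$-pronged level-set singularity; applying the analogous Euler characteristic count to a slightly perturbed super-level set inside the nodal domain $\Omega^\pm\ni x_0$ (which is a planar subdomain of a genus-$0$ surface) yields a contradiction, using Morse theory and the fact that a $k$-prong saddle contributes $-(k-1)\le -2$ to the Euler characteristic change. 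The boundary version of (iii) proceeds analogously, using in addition that $\partial_\nu u_2(\xi)=\sigma_2 u_2(\xi)\ne 0$.

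The main obstacle I anticipate is the careful bookkeeping in the Euler characteristic argument: precisely tallying vertex and edge contributions from each type of smooth or singular zero (interior versus boundary, and, for (ii)--(iii), those produced by generic level sets inside a nodal domain) and verifying that any degeneracy forces $F\ge 3$. The genus-$0$ assumption is essential here, since it is exactly what makes $M$ (and each nodal domain $\Omega^\pm$) planar enough for the resulting Euler characteristic inequality to produce the required contradiction.
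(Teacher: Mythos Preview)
Your proposal mixes several distinct statements. Theorem~\ref{u2_morse} concerns interior critical points only; the boundary non-degeneracy you call (iii) is the separate Theorem~\ref{bdry_zeros}, and your claim (i) is in fact \emph{false} on non-simply-connected genus-$0$ surfaces: the paper exhibits, on the flat cylinder $C_{T^*}$, an explicit second eigenfunction with two singular boundary zeros. So the nodal-graph count you sketch cannot in general yield $\nabla u_2\neq 0$ on $\partial M$.

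The real gap is in your argument for (ii), which is the actual content of the theorem. If $x_0\in M$ is a degenerate interior critical point with $u_2(x_0)=c\neq 0$, then $w:=u_2-c$ is harmonic but is \emph{not} a Steklov eigenfunction (it satisfies $\partial_\nu w=\sigma_2(w+c)$ on $\partial M$), so Courant's theorem says nothing about its nodal count, and there is no a priori bound on the topology of the super-level sets of $u_2$ inside $\Omega^\pm$. A $k$-prong saddle does contribute $-(k-1)$ as the level crosses $c$, but nothing in your outline bounds the total budget: harmonic functions on a planar domain can have arbitrarily many saddles and arbitrarily complicated level-set topology. Your Euler count has nothing to balance against, so no contradiction emerges; this is not a bookkeeping issue but a missing mechanism.

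The paper closes this gap with a variational argument. After showing (via a loop construction that genuinely uses the genus-$0$ hypothesis) that $w$ has at least four nodal domains $\Omega_1,\dots,\Omega_m$, one sets $\phi=\sum_i a_i\,w|_{\Omega_i}$ and chooses the $a_i$ to satisfy \emph{two} constraints, $\sum_i a_i\gamma_i=0$ and $\sum_i a_i^{2}\gamma_i=0$ with $\gamma_i=\int_{\partial M\cap\overline{\Omega}_i}w$; this system is solvable precisely because $m\ge 4$. The first condition makes $\phi$ orthogonal to constants on $\partial M$; the second, combined with $\partial_\nu w=\sigma_2 w+\sigma_2 c$, forces the Rayleigh quotient of $\phi$ to equal $\sigma_2$ exactly. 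Hence $\phi$ is itself a second Steklov eigenfunction with at least four nodal domains, and now Courant gives the contradiction. This min-max step is the essential idea and is entirely absent from your plan.
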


\begin{proof}
The proof follows the same lines as that of \cite[Lemma 3]{PP}. Let $u_2$ be a second eigenfunction and let $p\in M$ be a critical point of $u_2$. Assume it is degenerate. This implies that the Hessian at $p$ vanishes. Consider the function $w=u_2-u_2(p)$. Clearly $\Delta w=0$, and $p$ is a zero of $w$ such that $\nabla w(p)=0$, $D^2w(p)=0$. Hence the function $w$ has at least {\it three} nodal curves intersecting at $p$ and forming equal angles. Consider a small geodesic disk $D$ centered at $p$ such that the nodal curves divide $D$ into $2k\geq 6$ disjoint regions $D_i$, $i=1,...,2k$. Assume by contradiction that there are at most $3$ nodal domains for $u_2$ in $M$. Then, necessarily, at least three of the disjoint regions $D_i$ must belong to the same nodal domain $\Omega$. We label them by $D_1,D_2,D_3$. They are not adjacent, since $u_2$ is harmonic and cannot have interior local maxima or minima. Consider now  a simple closed curve $\gamma_{12}$ passing through $p$, belonging to the nodal domain $\Omega$, and intersecting both $D_1$ and $D_2$. Since $D_1$ an $D_2$ are not adjacent, one of the regions $D_i$, $i\geq 4$, belongs to a nodal domain $\Omega'$ distinct from $\omega$. The nodal domain $\Omega'$ must be contained in the bounded region enclosed by $\gamma_{12}$. This is true since we are in genus $0$ (we may always think of $M$ as a sphere minus $m$ disjoint geodesic disks, where $m$ is the number of boundary components). We repeat the same argument by forming closed loops $\gamma_{13}$ and $\gamma_{23}$ considering the couples $D_1,D_3$ and $D_2,D_3$. We can choose these curves such that $\gamma_{12}\cap\gamma_{13}=\gamma_{12}\cap\gamma_{23}=\gamma_{13}\cap\gamma_{23}=p$. We conclude that there are at least other two  nodal domains $\Omega'',\Omega'''$, enclosed by the curves $\gamma_{13}$ and $\gamma_{23}$ respectively, and disjoint from $\Omega$ and $\Omega'$. This is a contradiction with the fact that we assumed that the nodal domains were at most $3$.


We have proved that there are at least $4$ interior nodal domains for $w$.

Let then $\Omega_i$, $i=1,...,m$, $m\geq 4$, the interior nodal domains of $w$, and let $w_i=w|_{\Omega_i}$ (extended by $0$ outside $\Omega_i$). Let
$$
\phi:=\sum_{i=1}^m a_iw_i
$$
where $a_i\in\mathbb R$ are not all zero and are chosen such that
$$
\int_{\partial M}\phi=\sum_{i=1}^ma_i\int_{\partial\Omega_i}w_i=0
$$
and
$$
\sum_{i=1}^ma_i^2\int_{\partial\Omega_i}w_i=0.
$$
Here $\partial\Omega_i=\partial M\cap\overline\Omega_i$. By construction, $\phi\in H^1(M)$. It is always possible to find such $a_i$ since $m\geq 4$. In fact, let us set $\gamma_i:=\int_{\partial\Omega_i}w_i$. We need to solve the system
$$
\begin{cases}
\sum_{i=1}^m\gamma_ia_i=0\\
\sum_{i=1}^m\gamma_1 a_i^2=0.
\end{cases}
$$
Without loss of generality, we may assume that $\gamma_i>0$ for $i$ odd and $\gamma_i<0$ for $i$ even. We will show that a non-trivial solution exists. We set $a_i=0$ for $i\geq 5$ and look for a non-trivial solution to
$$
\begin{cases}
|\gamma_1|a_1-|\gamma_2|a_2+|\gamma_3|a_3-|\gamma_4|a_4=0\\
|\gamma_1|a_1^2-|\gamma_2|a_2^2+|\gamma_3|a_3^2-|\gamma_4|a_4^2=0
\end{cases}
$$
which we re-write as
$$
\begin{cases}
|\gamma_1|a_1-|\gamma_2|a_2+|\gamma_3|a_3-|\gamma_4|a_4=0\\
(\sqrt{|\gamma_1|}a_1+\sqrt{|\gamma_2|}a_2)(\sqrt{|\gamma_1|}a_1-\sqrt{|\gamma_2|}a_2)\\\quad\quad\quad\quad\, +(\sqrt{|\gamma_3|}a_3+\sqrt{|\gamma_4|}a_4)(\sqrt{|\gamma_3|}a_3-\sqrt{|\gamma_4|}a_4)=0
\end{cases}
$$
A non-trivial solution is given, for example, by any solution of the following system of three equations in four unknowns $a_1,a_2,a_3,a_4$:
$$
\begin{cases}
|\gamma_1|a_1-|\gamma_2|a_2+|\gamma_3|a_3-|\gamma_4|a_4=0\\
\sqrt{|\gamma_1|}a_1+\sqrt{|\gamma_2|}a_2=0\\
\sqrt{|\gamma_3|}a_3+\sqrt{|\gamma_4|}a_4=0
\end{cases}.
$$
Now, since $\int_{\partial M}\phi=0$, by the min-max principle we have
$$
\sigma_2\leq\frac{\int_M|\nabla \phi|^2}{\int_{\partial M}\phi^2}=\frac{\sum_{i=1}^m\alpha_i^2\int_{\Omega_i}|\nabla w|^2}{\sum_{i=1}^m\int_{\partial\Omega_i}w^2}.
$$
We also have
$$
\int_{\Omega_i}|\nabla w|^2=\int_{\partial\Omega_i}\partial_{\nu}ww=\sigma_2\int_{\partial\Omega_i}w^2+\sigma_2 u_2(p)\int_{\partial\Omega_i}w,
$$
and hence
$$
\sum_{i=1}^m\alpha_i^2\int_{\Omega_i}|\nabla w|^2=\sigma_2\sum_{i=1}^m\int_{\partial\Omega_i}w^2+\sigma_2 u(p)\sum_{i=1}^m\alpha_i^2\int_{\partial\Omega_i}w=\sigma_2\sum_{i=1}^m\int_{\partial\Omega_i}w^2
$$
by the specific choice of $\alpha_i$. Therefore
$\phi$ realizes the equality in the min-max principle, which implies that it is a second Steklov eigenfunction. On the other hand, $\phi$ has at least four nodal domains, which is not possible since a second Steklov eigenfunction has exactly two interior nodal domains.
\end{proof}

\begin{theorem}\label{bdry_zeros}
Singular zeros of $u_2$ on the boundary are non-degenerate.
\end{theorem}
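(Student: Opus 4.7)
My plan is to argue by contradiction, imitating the strategy of Theorem \ref{u2_morse}. Let $\xi\in\partial M$ be a singular zero of $u_2$, that is $u_2(\xi)=0$ and $\bar\nabla u_2(\xi)=0$, and suppose for a contradiction that $\xi$ is a \emph{degenerate} critical point of $u_2|_{\partial M}$, so the Hessian of $u_2|_{\partial M}$ at $\xi$ also vanishes. The Steklov condition immediately gives $\partial_\nu u_2(\xi)=\sigma_2u_2(\xi)=0$, so the full gradient $\nabla u_2(\xi)=0$. I first want to upgrade this to vanishing of \emph{all} partial derivatives of $u_2$ of order $\le 2$ at $\xi$. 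Working in Fermi coordinates $(s,t)$ at $\xi$ with $t\ge 0$ pointing into $M$ and $\partial M=\{t=0\}$, the metric is Euclidean at $\xi$, so $0=\Delta u_2(\xi)=\partial_s^2 u_2(\xi)+\partial_t^2 u_2(\xi)$; combined with $\partial_s^2 u_2(\xi)=0$ (the degeneracy assumption) this forces $\partial_t^2 u_2(\xi)=0$. Differentiating the Steklov condition $\partial_tu_2|_{t=0}=-\sigma_2 u_2|_{t=0}$ in $s$ gives $\partial^2_{st}u_2(\xi)=-\sigma_2\partial_su_2(\xi)=0$. Hence $u_2$ vanishes at $\xi$ to order at least $3$.

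I then pass to local isothermal coordinates around $\xi$, in which $M$ becomes the upper half-plane, $\partial M$ the real axis, and $u_2$ is harmonic with respect to the Euclidean Laplacian. Locally $u_2=\mathrm{Re}\,f(z)$ for some holomorphic $f(z)=cz^n+O(z^{n+1})$ with $c\ne 0$ and, by the first step, $n\ge 3$. The nodal set of the leading homogeneous harmonic polynomial $\mathrm{Re}(cz^n)$ is a union of $n$ straight lines through the origin with equal angular spacing, so by the standard description of zeros of harmonic functions at singular points, $\{u_2=0\}$ in a small half-disk $D\cap M$ around $\xi$ is a union of $n$ smooth arcs meeting at $\xi$. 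These arcs partition $D\cap M$ into at least $n+1\ge 4$ open sectors on which $u_2$ has alternating signs. (The non-generic case in which one of the leading nodal lines coincides with $\partial M$ is handled by going to the next order in the Taylor expansion, since $u_2\not\equiv 0$.)

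To complete the argument I adapt the min-max construction from the proof of Theorem \ref{u2_morse}. Following the interior nodal arcs emanating from $\xi$ inside $M$, the two nodal domains of $u_2$ are further split into $m\ge 4$ sub-regions $\Omega_1,\dots,\Omega_m$. On each $\Omega_i$ the function $w_i:=u_2|_{\Omega_i}$ (extended by $0$) belongs to $H^1(M)$, is harmonic in $\Omega_i$, vanishes on the internal nodal portion of $\partial\Omega_i$, and satisfies $\partial_\nu w_i=\sigma_2 w_i$ on $\partial\Omega_i\cap\partial M$. Setting $\gamma_i:=\int_{\partial\Omega_i}w_i$, the condition $m\ge 4$ permits, exactly as in Theorem \ref{u2_morse}, a non-trivial choice of $\alpha_i\in\mathbb R$ solving
$$\sum_{i=1}^m\gamma_i\alpha_i=0,\qquad\sum_{i=1}^m\gamma_i\alpha_i^2=0.$$
The trial function $\phi:=\sum_i\alpha_iw_i$ then has mean zero on $\partial M$ and, by the same computation as in the proof of Theorem \ref{u2_morse}, realizes equality in the variational characterization of $\sigma_2$. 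Hence $\phi$ is itself a second Steklov eigenfunction with at least $m\ge 4$ nodal domains, contradicting Courant's nodal domain theorem.

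The main obstacle is to verify that the extra nodal arcs through the boundary singular point $\xi$ really produce at least four globally distinct sub-regions of $M$, rather than reconnecting back to $\xi$ or otherwise failing to partition the existing nodal domains of $u_2$. Unlike the interior case of Theorem \ref{u2_morse}, the singular point now sits on $\partial M$, so tracking the global extension of the singular arcs through $\{u_2=0\}$ (which is smooth away from its finitely many singular points) and using the genus-$0$ assumption to control the topology of the cut is the delicate step, exactly parallel to—and to be carried out with the same care as—the corresponding topological step in the proof of Theorem \ref{u2_morse}.
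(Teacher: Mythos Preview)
Your local analysis is correct and matches the paper: at a degenerate singular boundary zero the full second jet of $u_2$ vanishes, so at least three nodal arcs meet at $\xi$, giving $\ge 4$ sectors of alternating sign in the half-disk.

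However, the min-max detour you take afterwards is both unnecessary and, as written, does not work. The key point you miss is that, unlike in Theorem~\ref{u2_morse}, here $u_2(\xi)=0$, so the function whose level lines you are following is $u_2$ \emph{itself}, not a shifted function $w=u_2-u_2(p)$. Consequently your ``sub-regions $\Omega_i$'' obtained by cutting along nodal arcs from $\xi$ are nothing other than nodal domains of $u_2$: nodal arcs lie in $\{u_2=0\}=\partial\{u_2>0\}$ and therefore cannot ``further split'' the two nodal domains --- removing a subset of the nodal set gives at most as many components as there are nodal domains. So either $m\le 2$ and your test-function construction collapses, or $m\ge 3$ and you already have at least three nodal domains of the second eigenfunction, contradicting Courant's theorem directly. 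The paper's proof is exactly this shortcut: once the topological argument from Theorem~\ref{u2_morse} (which you correctly flag as the delicate step) yields $\ge 3$ nodal domains for $u_2$, the contradiction is immediate and no variational construction is needed. Incidentally, even if one insisted on running the min-max, the constraint $\sum\gamma_i\alpha_i^2=0$ is automatic here because the extra term in the Rayleigh quotient carries the factor $u_2(\xi)=0$; this is another symptom that the argument is idling.
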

\begin{proof}
Let $x_0\in\partial M$ be such that $u_2(x_0)=0$ and $\nabla u_2(x_0)=0$. Hence, in a neighborhood of $x_0$ the zero level set of $u_2$ consists of at least two curves intersecting at equal angles $\theta\in(0,\pi)$ in $x_0$, and moreover the smallest angle between these curves and the tangent to $\partial M$ at $x_0$ is $\theta/2$. This result follows from the local behavior of Steklov eigenfunctions on surfaces near a singular boundary zero, see \cite[Appendix A]{GH} for more details and more general boundary conditions. In particular, if $D^2u_2(x_0)=0$, we have at least three zero level curves intersecting in $x_0$, and by the same argument used in the proof of Theorem \ref{u2_morse}, this implies that there are at least three nodal domains for $u_2$, which is impossible. Hence $D^2u_2(x_0)\ne 0$, and $x_0$ is a non-degenerate critical point.
\end{proof}

\begin{remark}
In the case of a simply-connected surface, the proof of the previous theorem tells that there are no singular zeros of $u_2$ on $\partial M$. The natural question arises whether in the non simply-connected case singular zeros can occur for $u_2$ at the boundary. The answer is positive, as we can see in the case of a specific flat cylinder (see the examples at the end of this section). In this sense Theorem \ref{bdry_zeros} is sharp.
\end{remark}

\begin{remark}
In view of Theorem \ref{bdry_zeros}, we see that for a Steklov eigenfunction on a genus $0$ surface, given a point $x_0\in\partial M$ where $u_2$ vanishes, two possibilities may occur: $x_0$ is not singular, hence in a neighborhood of $x_0$ there is a unique zero level curve meeting $\partial M$ orthogonally at $x_0$; $x_0$ is singular, and not degenerate, hence in a neighborhood of $x_0$ the zero level set of $u_2$ is given by the union of two curves, meeting orthogonally at $x_0$, and forming with the tangent to $\partial M$ at $x_0$ an angle of $\pi/4$.
\end{remark}

For completeness, we state and prove the following result on interior singular zeros. We will not need it for the purposes of the present section. However, in Appendix \ref{gen} we prove that the analogous result holds for generic metrics and for all eigenfunctions. Note that it is not a generic result for $u_2$.

\begin{theorem}
The function $u_2$ has no interior singular zeros.
\end{theorem}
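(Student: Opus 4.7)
The plan is to argue by contradiction, leveraging the Morse property of $u_2$ established in Theorem \ref{u2_morse}, the topological separation construction used in its proof, and Courant's nodal domain theorem. Suppose, toward a contradiction, that there exists an interior point $p\in M\setminus\partial M$ with $u_2(p)=0$ and $\nabla u_2(p)=0$. Then $p$ is an interior critical point of $u_2$, and by Theorem \ref{u2_morse} it is non-degenerate. Since $u_2$ is harmonic, $\operatorname{tr}D^2u_2(p)=0$, and non-degeneracy forces the Hessian to have one positive and one negative eigenvalue, so $p$ is a non-degenerate saddle. By the Morse lemma, in a sufficiently small geodesic disk $D$ centered at $p$ (which we may take disjoint from $\partial M$), the zero set of $u_2$ consists of exactly two smooth curves meeting transversely at $p$ and dividing $D$ into four cyclic sectors $D_1,D_2,D_3,D_4$ on which $u_2$ alternates in sign; say $u_2>0$ on $D_1,D_3$ and $u_2<0$ on $D_2,D_4$.

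Next I would invoke Courant's nodal domain theorem to conclude that $u_2$ has at most two nodal domains, and therefore exactly two, since $u_2$ is non-constant and has zero mean on $\partial M$ (being $L^2(\partial M)$-orthogonal to the constant eigenfunction). Call these nodal domains $\Omega^+$ and $\Omega^-$. It follows that $D_1,D_3\subset\Omega^+$ and $D_2,D_4\subset\Omega^-$. Following the construction in the proof of Theorem \ref{u2_morse}, I would build a simple closed curve $\gamma_{13}$ through $p$ lying entirely in $\Omega^+\cup\{p\}$: locally near $p$ take a short arc through $p$ joining a point of $D_1$ to a point of $D_3$, and close the loop by a path in $\Omega^+$ away from $D$. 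Because $M$ has genus $0$ (it is a sphere with finitely many disjoint open disks removed), this simple closed curve separates $M$, and by construction the small sectors $D_2$ and $D_4$ end up on opposite sides of $\gamma_{13}$.

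To conclude, any path inside $\{u_2<0\}$ joining a point of $D_2$ to a point of $D_4$ would have to meet $\gamma_{13}$; but this is impossible since $\gamma_{13}\setminus\{p\}\subset\{u_2>0\}$ and $p\notin\{u_2<0\}$. Hence $D_2$ and $D_4$ lie in distinct connected components of $\{u_2<0\}$, producing at least two negative nodal domains in addition to $\Omega^+$, which contradicts Courant's bound of two nodal domains for $u_2$. This rules out interior singular zeros and completes the argument. I expect the main subtle point to be the separation property of $\gamma_{13}$ on a compact genus-$0$ surface with boundary; as in the proof of Theorem \ref{u2_morse}, this is handled by capping off the boundary components to obtain a topological sphere and invoking the Jordan curve theorem there.
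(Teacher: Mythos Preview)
Your proof is correct and follows essentially the same route as the paper: both argue by contradiction, obtain at least four alternating-sign sectors at the putative singular zero, and then use the genus-$0$ separation argument (the same one underlying Theorem~\ref{u2_morse}) to force at least three nodal domains, contradicting Courant. The only minor difference is the source of the local picture: the paper invokes the classical structure theorem for nodal lines of harmonic functions at a singular zero (at least two curves meeting at equal angles), whereas you route through Theorem~\ref{u2_morse} to get non-degeneracy and then the Morse lemma. Your detour is logically fine but somewhat heavier, since Theorem~\ref{u2_morse} itself requires a more elaborate min-max argument; the paper's direct appeal to the harmonic local structure is more economical and self-contained for this particular statement.
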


\begin{proof}
Suppose $x_0\in M$ is such that $u_2(x_0)=0$, $\nabla u_2(x_0)=0$. We deduce that at $x_0$ the zero level set of $u_2$ consists of at least two curves intersecting at equal angles. This is a classical result, and we may refer e.g., to \cite[Appendix A]{GH} for a more detailed discussion. As for Theorems \ref{u2_morse} and \ref{bdry_zeros}, this implies that there are at least {\it three} interior nodal domains for $u_2$, which is not possible since $u_2$ has exactly {\it two} interior nodal domains.
\end{proof}

We now state the following result which can be deduced as a particular case of \cite[Theorem 3]{MM}.

\begin{theorem}\label{morse}
Let $M$ be a smooth surface with boundary $\partial M$, and let $V$ be a smooth vector field on $M$ with isolated singularities $x_i$ and without singularities on $\partial M$. Assume also that the projection of $V$ on each connected component $\mathscr Q_i$ of $\mathscr Q=\{\xi\in\partial M:\langle V,\nu\rangle>0\}$ is outward pointing to $\partial\mathscr Q_i$ (if $\mathscr Q_i$ has a boundary). Then
$$
\sum_i {\ind_{x_i} V}=\chi(M)-\chi(\mathscr Q).
$$
Here $\nu$ is the outer unit normal to $\partial M$.
\end{theorem}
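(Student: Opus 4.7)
My strategy is to reduce Theorem \ref{morse} to the classical Poincar\'e--Hopf theorem (Theorem \ref{PH}) by modifying $V$ on a thin collar of $\partial M$ into a vector field $\tilde V$ which is strictly outward-pointing on $\partial M$. The new zeros introduced by the modification will be counted via a one-dimensional Poincar\'e--Hopf argument on $\overline{\mathscr Q^c}:=\partial M\setminus\mathscr Q$, and their total index will equal $\chi(\mathscr Q)$, producing the correction term in the formula.

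In Fermi coordinates $(\xi,t)\in\partial M\times[0,\delta)$ on a collar $U$ of $\partial M$ (as in \eqref{fermi}), decompose $V=V_T+V_N\partial_t$; by hypothesis, $V_N(\cdot,0)$ is positive on $\mathscr Q$, negative on $\mathscr Q^c$, and vanishes on $\partial\mathscr Q$. Choose a smooth function $\lambda:\partial M\to[0,\infty)$ supported near $\overline{\mathscr Q^c}$ with $V_N(\cdot,0)+\lambda>0$ everywhere, and a cutoff $\eta:[0,\delta]\to[0,1]$ with $\eta(0)=1$ and $\eta\equiv 0$ on $[\delta/2,\delta]$. Set
$$\tilde V:=V+\eta(t)\lambda(\xi)\partial_t\quad\text{in }U,\qquad \tilde V:=V\quad\text{on }M\setminus U.$$
Then $\tilde V$ is outward-pointing on $\partial M$, and Theorem \ref{PH} gives $\sum_{\tilde V(x)=0}\ind_x\tilde V=\chi(M)$. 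For $\delta$ small enough, the interior zeros $x_i$ of $V$ lie outside $U$ and are preserved, while the new zeros of $\tilde V$ are confined to the collar above $\overline{\mathscr Q^c}$ (since $\lambda$ vanishes elsewhere).

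A new zero $(\xi_0,t_0)$ requires $V_T(\xi_0,t_0)=0$ and $V_N(\xi_0,t_0)+\eta(t_0)\lambda(\xi_0)=0$. For $\delta$ small, $|\eta'|$ dominates $|\partial_tV_N|$, so the implicit function theorem solves the second equation as $t_0=t^*(\xi_0)$ smoothly, placing the new zeros in bijection with the zeros of $V_T|_{t=0}$ in $\overline{\mathscr Q^c}$. The transversality hypothesis---that $V_T$ on $\mathscr Q_i$ points outward from $\mathscr Q_i$ at $\partial\mathscr Q_i$---means $V_T$ points into $\overline{\mathscr Q^c}$ at $\partial\mathscr Q$, so $V_T|_{\overline{\mathscr Q^c}}$ is inward-pointing on its one-dimensional boundary. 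Linearizing $\tilde V$ at a new zero and using that $\partial_t(V_N+\eta\lambda)<0$ dominates the $2\times 2$ Jacobian for $\delta$ small, the $2$-dimensional index of the new zero equals $-1$ times the $1$-dimensional index of the associated zero of $V_T|_{\overline{\mathscr Q^c}}$. The $1$D Poincar\'e--Hopf for the inward-pointing field $V_T|_{\overline{\mathscr Q^c}}$ gives $\sum\ind V_T=-\chi(\overline{\mathscr Q^c})$, hence $\sum_{\text{new}}\ind\tilde V=\chi(\overline{\mathscr Q^c})$. Since $\chi(\partial M)=0$, inclusion--exclusion on $\partial M=\overline{\mathscr Q}\cup\overline{\mathscr Q^c}$ with intersection $\partial\mathscr Q$ (each arc of $\mathscr Q$ contributing two endpoints) yields $\chi(\overline{\mathscr Q^c})=\chi(\overline{\mathscr Q})=\chi(\mathscr Q)$, and the claimed identity follows.

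\textbf{Main obstacle.} The delicate step is the linearization: one must choose $\delta$ small enough that the $(\partial_t,\partial_t)$-entry of the Jacobian of $\tilde V$ at a new zero dominates the off-diagonal entries, guaranteeing that the $2$D index reduces cleanly to $-1$ times the $1$D index of $V_T$. This sign tracking is what makes the one-dimensional Poincar\'e--Hopf on $\overline{\mathscr Q^c}$ produce exactly $\chi(\mathscr Q)$ rather than $\pm\chi(\mathscr Q)$ or a shifted quantity. A secondary technical point is that one may need a small generic perturbation of $V_T$ to ensure its zeros on $\overline{\mathscr Q^c}$ are isolated, after which stability of the index under small perturbations closes the argument.
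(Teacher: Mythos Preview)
The paper does \emph{not} prove Theorem~\ref{morse}: it quotes the result as a special case of \cite[Theorem~3]{MM} and even remarks later that ``the proof of Theorem~\ref{morse} (and of the more general \cite[Theorem~3]{MM}) is much longer and technical'' than the double-manifold argument for Theorem~\ref{T_0}. So there is no proof in the paper to compare against; your proposal supplies one where the paper simply cites.

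Your strategy---push $V$ outward on a collar, apply the classical Poincar\'e--Hopf Theorem~\ref{PH} to the modified field~$\tilde V$, and account for the extra zeros via a one-dimensional Poincar\'e--Hopf on~$\overline{\mathscr Q^c}$---is a standard and correct route to such ``Poincar\'e--Hopf with partial boundary control'' statements. The sign bookkeeping is right: the dominant $\partial_t(V_N+\eta\lambda)<0$ makes each new $2$D index equal $-\ind_\xi V_T$, the inward-pointing $1$D Poincar\'e--Hopf on $\overline{\mathscr Q^c}$ gives $\sum\ind V_T=-\chi(\overline{\mathscr Q^c})$, and your inclusion--exclusion on $\partial M$ correctly yields $\chi(\overline{\mathscr Q^c})=\chi(\mathscr Q)$. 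Two technical points you flag yourself deserve a bit more care in a full write-up: (i) the ``bijection'' between new zeros of $\tilde V$ and zeros of $V_T|_{t=0}$ on $\overline{\mathscr Q^c}$ is really a small-perturbation/degree argument (you need $V_T(\xi,t^*(\xi))$ close to $V_T(\xi,0)$ uniformly, plus $V_T\neq0$ near $\partial\mathscr Q$ from the transversality hypothesis, so that no zeros escape through $\partial\mathscr Q$), and (ii) a generic perturbation of $V_T$ to make its boundary zeros nondegenerate, followed by stability of index, is the clean way to justify the Jacobian computation. Neither is a gap, just something to spell out.

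It is worth noting that your collar-perturbation approach is in the same spirit as the paper's own proof of Theorem~\ref{T_0}: there the authors pass to the double $\mathscr DM$ and build an auxiliary $C^1$ function $\tilde u$ whose critical points split into interior ones (counted twice) and boundary ones, then apply Poincar\'e--Hopf on the closed manifold. Your argument keeps $M$ with boundary but modifies $V$ to satisfy the outward-pointing hypothesis of Theorem~\ref{PH}. Both reduce to the classical theorem after a controlled modification; yours is closer to Morse/Pugh-type arguments, while the paper's doubling trick (for Theorem~\ref{T_0}) exploits the reflection symmetry available for gradient fields.
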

If $V=\nabla u_2$ then $\mathscr Q=\{\xi\in\partial M: u(\xi)>0\}$ and for $n=2$, $\chi(\mathscr Q)=\chi(\mathscr P)$, where  $\mathscr P=\{\xi\in\partial M: u(\xi)<0\}$. If $u_2$ satisfies the hypothesis of Theorem \ref{morse}, we have \eqref{E2_4}.

We note that Theorem \ref{morse} applies to $u_2$ for genus $0$ surfaces in case we have no singular zeros on $\partial M$, since interior critical points are Morse by \ref{u2_morse}. However, it applies also in the case when there are boundary singular points, since by Theorem \ref{bdry_zeros} they are non degenerate, and the behavior of $u_2$ in a neighborhood of such points is explicit (see also \cite[Appendix A]{GH}). To see this, let us consider for simplicity the Euclidean case to which one can reduce with a conformal diffeomorphism. Let $x_0$ be a singular boundary zero. We can assume $x_0=(0,0)$ (in Euclidean coordinates $(x,y)$), and also that the tangent to the boundary at $x_0$ is given by $y=0$. Hence, in a neighborhood of $x_0$ we have that (up to a sign) $u_2(x,y)=c(x^2-y^2)+O((x^2+y^2)^{1+\epsilon})$, $c>0$. We can replace $\nabla u_2$ by $V$, where $V=\nabla u_2+(\epsilon-\sqrt{x^2+y^2})\chi_{B(x_0,\epsilon)}\vec e_2$. Here $(\vec e_1,\vec e_2)$ denotes the canonical basis of $\mathbb R^2$. If $\epsilon$ is chosen sufficiently small, $V$ has no zeros on $B(x_0,\epsilon)\cap\partial M$, $\langle V,\nu\rangle>0$ in $B(x_0,\epsilon)\cap\partial M$, and no new interior critical points have been produced in $B(x_0,\epsilon)\cap M$. Hence one can apply Theorem \ref{morse} to $V$. Doing so we are counting the sum of the indexes of the {\it interior} critical points of $u_2$. Note that for the perturbed vector field the point $x_0$ is an interior point of a connected component of $\mathscr Q$. We have just proved the following non-generic result.

\begin{corollary}\label{cor_sur}
Let $M$ be a smooth compact surface of genus $0$ with boundary and let $u_2$ be the second Steklov eigenfunction on $M$. Then
\begin{equation}\label{E_c}
\sharp\{{\rm interior\ critical\ points\ of\ }u_2\}=\sum_{j=1}^L\ell_j-\chi(M),
\end{equation}
where $L$ is the number of connected components of $\partial M$ where $u_2$ changes sign exactly $\ell_j$ times (in the case of singular boundary zeros, they must not be counted as sign changing).

\end{corollary}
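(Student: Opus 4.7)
The plan is to apply Theorem~\ref{morse} to (a suitable modification of) the vector field $V = \nabla u_2$. First I verify that $u_2$ has only isolated critical points in $M$: by Theorem~\ref{u2_morse}, $u_2$ is Morse, so each interior critical point is non-degenerate, and since $u_2$ is harmonic every such point is a saddle with Morse index $1$ and topological index $\ind_x \nabla u_2 = -1$.

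For the direct application of Theorem~\ref{morse} I need $V$ to have no zeros on $\partial M$. This holds automatically when every boundary zero of $u_2$ is non-singular: at such points $\bar\nabla u_2 \neq 0$, hence $\nabla u_2 \neq 0$ on $\partial M$, and on the boundary $\partial \mathscr Q_i$ of each component of $\mathscr Q = \{u_2 > 0\}$ the tangential component of $\nabla u_2$ is the nonzero $\bar\nabla u_2$ pointing out of $\mathscr Q_i$. Theorem~\ref{morse} then gives $\sum_x \ind_x \nabla u_2 = \chi(M) - \chi(\mathscr Q)$.

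When $u_2$ does have singular boundary zeros, by Theorem~\ref{bdry_zeros} each such zero $x_0$ is non-degenerate with explicit local form. Reducing via a conformal diffeomorphism to the Euclidean setting, with $x_0 = (0,0)$ and the tangent to $\partial M$ along $y=0$, one has $u_2(x,y) = c(x^2-y^2) + O((x^2+y^2)^{1+\epsilon})$ with $c > 0$ (up to sign). I then modify $\nabla u_2$ in a small neighborhood $B(x_0,\epsilon)$ of each singular boundary zero by setting $V := \nabla u_2 + (\epsilon - \sqrt{x^2+y^2})\, \chi_{B(x_0,\epsilon)}\, \vec{e}_2$ for $\epsilon$ sufficiently small. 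The additive term is bounded away from zero near $x_0$, where $|\nabla u_2| = O(\sqrt{x^2+y^2})$; this ensures that (a) $V$ has no zeros on $\partial M \cap B(x_0,\epsilon)$ and $\langle V,\nu\rangle>0$ there, (b) no new interior critical points are created inside $B(x_0,\epsilon) \cap M$, and (c) the interior critical points of $V$ in $M$ coincide with those of $u_2$. After this perturbation $x_0$ lies in the interior of a component of $\mathscr Q$ relative to $V$, so Theorem~\ref{morse} applies and still yields $\sum_x \ind_x V = \chi(M) - \chi(\mathscr Q)$.

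To conclude, each component of $\mathscr Q$ is an open arc of $\partial M$ of Euler characteristic $1$, so $\chi(\mathscr Q)$ equals the total number of positive arcs, which (by the same sign-change bookkeeping used for formula \eqref{E2_4}) equals $\sum_{j=1}^L \ell_j$, with singular boundary zeros not counted as sign changes. Combined with $\sum_x \ind_x V = -\sharp\{\text{interior critical points of } u_2\}$, this produces \eqref{E_c}. The main obstacle is the perturbation step near singular boundary zeros: one must use the explicit asymptotics from \cite[Appendix A]{GH} to check that the constant-order correction dominates $\nabla u_2$ on a sufficiently small ball and does not introduce spurious interior zeros of $V$.
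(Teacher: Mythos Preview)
Your proof is correct and follows essentially the same route as the paper: apply Theorem~\ref{morse} to $\nabla u_2$, handling the possible singular boundary zeros by the exact perturbation $V=\nabla u_2+(\epsilon-\sqrt{x^2+y^2})\chi_{B(x_0,\epsilon)}\vec e_2$ after a conformal reduction to the Euclidean model, and then read off the Euler characteristic of $\mathscr Q$ in terms of sign changes. One small imprecision: not every component of $\mathscr Q$ is an open arc---on boundary circles where $u_2$ keeps a constant sign the component is a full circle with $\chi=0$---but this does not affect the final count, since only sign-changing components contribute to $\sum_j\ell_j$.
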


\begin{remark}
We note that Theorem \ref{T_2} in the case $n=2$ can be also deduced from Theorem \ref{morse}. In  fact, using $-u$ in \eqref{E2_1} we get the formula 
$$
(-1)^n\sum_{\{x\in M:\nabla u(x)=0\}}{\rm ind}_x\nabla u=\chi(M)-\chi(\mathscr Q),
$$
where $\mathcal Q=\{\xi\in\partial M : u(\xi)>0\}$.

 However the proof of Theorem \ref{morse} (and of the more general \cite[Theorem 3]{MM}) is much longer and technical with respect to the proof of Theorem \ref{T_0} using the double manifold. The general version of Theorem \ref{morse}, \cite[Theorem 3]{MM}, is a generalization of the Poincaré-Hopf theorem for manifolds with boundary when the vector field is not outward pointing everywhere on the boundary.
\end{remark}

We can verify the validity of Corollary \ref{cor_sur} in various situations. For example, let $A=\{x\in\mathbb R^2:r<|x|<R\}$ for some $0<r<R$ be a planar annulus. For any $r,R$ we have that $u_2$ changes sign exactly once on $|x|=r$ and on $|x|=R$, and the boundary zeros are not singular. Hence
$$
\sharp\{{\rm critical\ points\ of\ }u_2\}=2-0=2:
$$
for any annulus of the plane, we have exactly two interior critical points, which are saddles. This can be checked by explicitly computing $u_2$.

\medskip

Let us consider now $C_T=\mathbb S^1\times[-T,T]$, a flat cylinder. It is well-known that the shape of $u_2(\theta,z)$ depends on $T$ (see e.g., \cite{GP_survey} where the explicit expressions of Steklov eigenvalues and eigenfunctions on flat cylinders are given). Let $T^*$ be the unique positive root of $T\tanh(T)=1$. Then, it is easy to see that for $0<T<T^*$, the second eigenvalue is double and is given by $\tanh(T)$, and any second eigenfunction $u_2$ changes sign once on each boundary component $\mathbb S^1\times\{-T\}$ and $\mathbb S^1\times\{T\}$. As for the case of the annulus, we have two interior critical points, which are saddles. For $T>T^*$ however the second eigenvalue is simple and given by $\frac{1}{T}$, and $u_2(\theta,z)=z$. It has constant sign on each connected component of the boundary and, therefore, from \eqref{E_c} we deduce that it has no interior singular points (which is an immediate check). We observe then that only the connected components of the boundary where the eigenfunction changes sign influence the number of interior critical points.

Consider now the flat cylinder $C_{T^*}$.  The second eigenvalue has multiplicity three and three linearly independent eigenfunctions are given by $\cosh(z)\cos(\theta)$, $\cosh(z)\sin(\theta)$, $z$. Hence $\cosh(z)\cos(\theta)\pm\frac{\cosh(T^*)}{T^*}z$ is also an eigenfunction. It has only two critical points: $(\theta,z)=(\pi/2,\pm T^*)$ and $(\theta,z)=(\frac{3\pi}{2},\mp T^*)$. These are boundary critical points which are non-degenerate, in particular, they are singular boundary zeros. In correspondence of these singular boundary zeros, nodal lines meet with an angle of $\pi/2$. Hence there are no interior critical points, and Corollary \ref{E_c} is verified. In general, for  $C_{T^*}$ one can check the validity of Corollary \ref{E_c} for any eigenfunction of the form $\cosh(z)\cos(\theta)+cz$, $c\in\mathbb R$. In fact, for $|c|<\frac{\cosh(T^*)}{T^*}$ we have exactly two interior critical points, and the eigenfunction changes sign exactly once on each boundary circle; for $|c|>\frac{\cosh(T^*)}{T^*}$ there are no critical points (in the interior and on the boundary), and the function has constant sign on each boundary circle.

\appendix

\section{Generic results for Steklov eigenfunctions}\label{gen}

Let $M$ be a compact smooth $n$-dimensional manifold with smooth boundary $\partial M$, and let $G$ denote the set of smooth Riemannian metrics on $M$. In this section we will prove the following theorem.

\begin{theorem}\label{T_gen}
Let $M$ be a compact smooth $n$-dimensional manifold with smooth boundary $\partial M$. Then the subset of $G$ of smooth metrics on $M$ for which all Steklov eigenfunctions are Morse functions in $M$ is residual (hence dense) in $G$.
\end{theorem}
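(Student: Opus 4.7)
The plan is to use a parametric transversality argument in the spirit of Uhlenbeck's classical work on generic Laplace eigenfunctions, combined with a Sard--Smale theorem, exactly as was done in \cite{wang} for Dirichlet-to-Neumann eigenfunctions on $\partial M$. First I would replace the Fr\'echet space $G$ by a separable Banach manifold $\mathcal G^{s,\alpha}$ of $C^{s,\alpha}$ Riemannian metrics for $s$ large; the smooth statement then follows from a Baire intersection argument. Next, I would reduce to simple eigenvalues: the set of metrics for which all Steklov eigenvalues are simple is itself residual (provable by the same transversality technique applied to the eigenvalue map), and on this set each eigenfunction $u_k^g$ depends real-analytically on $g$ by Kato perturbation theory. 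Finally, it is enough to prove, for each fixed $k$ and each compact $K\subset M\setminus\partial M$, that the set of metrics for which $u_k^g$ admits a degenerate critical point in $K$ is nowhere dense in $\mathcal G^{s,\alpha}$; the theorem then follows by intersecting over a countable exhaustion of $M\setminus\partial M$ and over $k$.

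To implement the transversality, in a fixed smooth chart around each $x$ I would introduce the evaluation map
\[
\Phi_k:\mathcal G^{s,\alpha}_{\mathrm{simple}}\times(M\setminus\partial M)\longrightarrow \mathbb R^n\oplus\mathrm{Sym}^2(\mathbb R^n),\qquad \Phi_k(g,x):=\bigl(\nabla u_k^g(x),\,D^2 u_k^g(x)\bigr),
\]
together with the closed stratified subset $\Sigma:=\{(\xi,H):\xi=0,\ \det H=0\}$, whose strata all have codimension at least $n+1$ in the target. A degenerate critical point of $u_k^g$ in $K$ is precisely a point $x$ with $\Phi_k(g,x)\in\Sigma$. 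If $\Phi_k$ is transverse to every stratum of $\Sigma$, the parametric Sard--Smale theorem yields a residual set of metrics $g$ for which $\Phi_k(g,\cdot)$ is transverse to $\Sigma$; since $\dim(M\setminus\partial M)=n<n+1\leq\mathrm{codim}\,\Sigma$, transversality forces empty intersection, which is exactly the Morse property on $K$.

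The core analytical step is thus establishing the transversality of $\Phi_k$. Differentiating the equations $\Delta_g u_k=0$ in $M$ and $\partial_{\nu_g} u_k=\sigma_k u_k$ on $\partial M$ along a metric variation $\dot g$, I obtain an inhomogeneous Steklov problem for $\dot u:=\partial_g u_k[\dot g]$ of the form
\[
\Delta_{g_0}\dot u=-(\dot\Delta) u_0\ \text{ in }M,\qquad \partial_{\nu_{g_0}}\dot u-\sigma_0\dot u=\dot\sigma\,u_0+(\dot B)u_0\ \text{ on }\partial M,
\]
with right-hand sides depending linearly on $\dot g$ and on the fixed pair $(u_0,\sigma_0):=(u_k^{g_0},\sigma_k^{g_0})$, where $\dot\Delta$ and $\dot B$ are the first variations of the Laplacian and of the conormal derivative. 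At a bad point $x_0$ with $\Phi_k(g_0,x_0)\in\Sigma$, I must show that the linear map $\dot g\mapsto\bigl(\nabla\dot u(x_0),D^2\dot u(x_0)\bigr)$, together with the $x$-derivative of $\Phi_k$, surjects onto the conormal of $\Sigma$ at $\Phi_k(g_0,x_0)$. Dualizing, the obstruction is an explicit jet identity for $u_0$ at $x_0$; restricting $\dot g$ to localized conformal perturbations $\dot g=2\varphi g_0$ with $\varphi\in C_c^\infty(M\setminus\partial M)$ simplifies the formula for $\dot\Delta$ and should reduce the statement to a unique-continuation assertion for the nontrivial harmonic function $u_0$.

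The main obstacle is precisely the surjectivity in the previous paragraph. The map $\dot g\mapsto\dot u$ is nonlocal: even when $\dot g$ is supported in a small ball around $x_0$, the resulting $\dot u$ is a global solution whose $2$-jet at $x_0$ reflects both the local form of the perturbed operator and the effect of the boundary condition on $\partial M$. Pushing this through requires careful bookkeeping of the first-variation formulas for $\Delta_g$ and for the conormal, plus a precise application of unique continuation for $u_0$ to rule out the obstructing jet identity; once surjectivity is in hand, Sard--Smale combined with the Baire intersection over $k$ and over a countable exhaustion of $M\setminus\partial M$ closes the proof.
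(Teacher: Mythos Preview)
Your overall framework---parametric transversality in the style of Uhlenbeck---is the same as the paper's, but the implementation differs substantially and your key step is left incomplete with a suggestion that actually fails in dimension $2$.

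The paper does \emph{not} first reduce to simple eigenvalues, nor does it track the $2$-jet $(\nabla u,D^2u)$. Following \cite{U}, it works directly on the universal manifold $Q=\varphi^{-1}(0)\subset H^k(\partial M)\times\mathbb R\times G$ of all eigentriples (with $\varphi(f,\sigma,g)=(\mathscr D-\sigma)f$) and applies the abstract transversality theorem to the single map $\beta:Q\times M\to TM$, $\beta(f,\sigma,g,x)=\nabla Hf(x)$. Transversality of the gradient section to the zero section of $TM$ is already equivalent to the Morse condition, so one only needs surjectivity of $D\beta_x$ onto the fibre $T_xM$---no separate control of the Hessian is required. Using the description of $T_{(f,\sigma,g)}Q$ together with a density result taken from \cite{wang} (Proposition~\ref{density0}), this surjectivity reduces to a clean Runge-type lemma (Proposition~\ref{prop_grad}): for every interior point $x$ and every $V\in T_xM$ there is a harmonic function $H$ on $M$ with $\nabla H(x)=V$. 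That lemma is proved by a Hahn--Banach duality argument showing that restrictions of global harmonic functions are $H^m$-dense among local ones.

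By contrast, your route requires surjectivity of $\dot g\mapsto(\nabla\dot u(x_0),D^2\dot u(x_0))$ onto the conormal of a stratified variety in $\mathbb R^n\oplus\mathrm{Sym}^2(\mathbb R^n)$, which you correctly flag as ``the main obstacle'' and do not prove. More seriously, your proposed tool---interior conformal perturbations $\dot g=2\varphi g_0$ with $\varphi\in C_c^\infty(M\setminus\partial M)$---produces $\dot u\equiv 0$ when $n=2$: the Laplacian is conformally invariant in two dimensions, and since $\varphi$ vanishes near $\partial M$ the Steklov boundary condition is unchanged as well, so the eigenpair does not move at all and no surjectivity can be extracted from such variations. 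The paper avoids this by allowing conformal factors that are \emph{not} supported away from $\partial M$; this is precisely what Proposition~\ref{density0} supplies.
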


We prove Theorem \ref{T_gen} following the ideas of \cite{U} (see also \cite{wang}).

We shall need the following transversality theorem (see \cite[Transversality Theorem 2]{U}): 

\begin{theorem}\label{trans2}
Let $Q,B,X,Y,Y'$ be separable Banach manifolds, $Y'\subset Y$, $X,Y$ finite dimensional. Let $\pi:Q\to B$ be a $C^k$ Fredholm map of index $0$. Then, if $F:Q\times X\to Y$ is a $C^k$ map for $k>\max\{1,{\rm dim}X+{\rm dim}Y'-{\rm dim}Y\}$ and $F$ is transverse to $Y'$, the set $\{b\in B:F_b:=F_{\pi^{-1}b}{\rm\ is\ transverse\ to\ }Y'\}$ is residual in $B$.
\end{theorem}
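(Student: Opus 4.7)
This is a parametric transversality theorem of the kind first proved by Uhlenbeck. The plan is to transfer the question of transversality of each fiber map $F_b$ to a single global map, namely the projection of the universal preimage $Z := F^{-1}(Y') \subset Q \times X$ onto the parameter manifold $B$, and then to invoke the Sard--Smale theorem on that projection.

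First I would verify that $Z = F^{-1}(Y')$ is a $C^k$ Banach submanifold of $Q \times X$. Working locally near any $(q,x) \in Z$, choose a smooth splitting $Y = Y' \oplus N$ and let $\bar F: Q \times X \to N$ be the $N$-component of $F$. The hypothesis $F \pitchfork Y'$ is exactly that $\bar F$ is a submersion, so the Banach implicit function theorem realizes $Z$ locally as the graph of a $C^k$ map, with tangent space $T_{(q,x)}Z = \ker d\bar F_{(q,x)}$, a closed subspace of codimension $\dim Y - \dim Y'$ in $T_{(q,x)}(Q \times X)$.

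Next I would introduce the projection $\rho: Z \to B$, $\rho(q,x) := \pi(q)$, and prove it is a $C^k$ Fredholm map of index $\dim X + \dim Y' - \dim Y$. The unrestricted auxiliary map $\tilde\rho: Q \times X \to B$, $(q,x) \mapsto \pi(q)$, is Fredholm of index $\ind(\pi) + \dim X = \dim X$ (its kernel enlarges by the whole of $T_x X$, while its cokernel coincides with that of $d\pi_q$). Restricting $\tilde\rho$ to the submanifold $Z$ of codimension $\dim Y - \dim Y'$ then decreases the index by exactly this amount; explicitly, a short diagram chase using the sequence
\[
0 \to \ker d\rho_{(q,x)} \to \ker d\tilde\rho_{(q,x)} \xrightarrow{d\bar F} N
\]
and its dual for the cokernels identifies $\ker d\rho$ and $\mathrm{coker}\,d\rho$ up to the $(\dim Y - \dim Y')$-dimensional contribution of $N$. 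The hypothesis $k > \max\{1, \dim X + \dim Y' - \dim Y\}$ is precisely what Sard--Smale requires for $\rho$ (the ``$1$'' also ensuring that $\rho$ is at least $C^1$, so transversality makes sense), and the theorem applied to $\rho$ shows that the regular values of $\rho$ form a residual subset of $B$.

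Finally I would identify this set of regular values with $\{b \in B : F_b \pitchfork Y'\}$. Unwinding definitions, $b$ is a regular value of $\rho$ iff for every $(q,x) \in Z$ with $\pi(q) = b$ the image $d\rho_{(q,x)}(T_{(q,x)}Z)$ equals $T_b B$. Substituting $T_{(q,x)}Z = \ker d\bar F_{(q,x)}$ and pulling back through $d\pi_q$ rewrites this surjectivity as
\[
dF_{(q,x)}\bigl(\ker d\pi_q \times T_x X\bigr) + T_{F(q,x)}Y' = T_{F(q,x)}Y,
\]
which is exactly the transversality of $F_b$ to $Y'$ at $(q,x)$. The main technical obstacle I expect is the Fredholm/index computation for $\rho$: one has to book-keep carefully which kernel directions of $\tilde\rho$ survive the passage to $TZ$ and how $\mathrm{coker}\,d\pi$ combines with $N \cong Y/Y'$ to produce $\mathrm{coker}\,d\rho$. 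Once this accounting is in place, the Sard--Smale theorem applied to $\rho$ together with the equivalence just described closes the argument.
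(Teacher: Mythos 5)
This theorem is not proved in the paper at all: it is quoted verbatim from Uhlenbeck's work (\cite[Transversality Theorem 2]{U}), so there is no in-paper argument to compare against. Your proposal is the standard Sard--Smale parametric-transversality argument that underlies Uhlenbeck's original proof, and it is correct: $Z=F^{-1}(Y')$ is a $C^k$ submanifold of codimension $\dim Y-\dim Y'$ by transversality of the full map, the restricted projection $\rho:Z\to B$ is Fredholm of index $\dim X+\dim Y'-\dim Y$, Sard--Smale (which needs exactly $k>\max\{1,\operatorname{ind}\rho\}$) makes its regular values residual, and every regular value $b$ of $\rho$ gives transversality of $F_b$ (only this one implication is needed, since any superset of a residual set is residual). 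The only points deserving extra care are the ones you already flag: the splitting $Y=Y'\oplus N$ must be taken locally in a chart adapted to the submanifold $Y'$, and the identification of $T_q\pi^{-1}(b)$ with $\ker d\pi_q$ in the final equivalence, which is the intended reading of ``$F_{\pi^{-1}b}$ transverse to $Y'$'' even when $b$ is not a regular value of $\pi$.
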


Let $H^k(\partial M)$ denote the usual Sobolev spaces of functions in $L^2(\partial M)$ with weak derivatives up to order $k\in\mathbb N$ in $L^2(\partial M)$. We choose $k$ sufficiently large so that functions in $H^k(\partial M)$ have some regularity (for example, $C^1$). Following \cite{U}, we consider the map
$$
\varphi: H^k(\partial M)\times \mathbb R\times G\to H^{k-1}(\partial M)
$$
defined by
$$
\varphi(f,\sigma,g)=(\mathscr D-\sigma)f,
$$
where $\mathscr D$ is the Dirichlet-to-Neumann map, namely $\mathscr D f=\partial_{\nu}(Hf)$, where $Hf$ is the {\it harmonic extension} of $f$, namely, it solves
$$
\begin{cases}
\Delta (Hf)=0 & {\rm in\ }M,\\
Hf=f & {\rm on\ }\partial M.
\end{cases}
$$
The Laplacian is taken with respect to the metric $g$.

\medskip

Since we are considering smooth metrics, from \cite[Lemma 2.1]{U} we get that $\varphi_b:=\varphi(\cdot,\cdot,b)$ is a Fredholm map of index $0$. Moreover, it is clear that $\varphi(f,\sigma,g)=0$ if and only if $f$ is an eigenfunction of $\mathscr D$ with eigenvalue $\sigma$ for the metric $g$ (see also \cite[Lemma 2.2]{U}).

\medskip

Consider now the Banach manifold $Q=\varphi^{-1}(0)$, namely the subset of $H^k(\partial M)\times\mathbb R\times G$ of all the eigenfunctions, and consider the following map:
$$
\beta:Q\times M\to TM
$$
 defined by
 $$
 \beta(f,\sigma,g,x)=\nabla Hf(x).
 $$
 Note that $f$ is an eigenfunction of $\mathscr D$ with eigenvalue $\sigma$ for the metric $g$ if and only if $Hf$ is a Steklov eigenfunction on $M$ for the same metric and with the same eigenvalue. From the smoothness assumptions on the metrics of $G$, we have that $\beta$ is a smooth map.
 
 \medskip
 
We will denote the differentiation of $\varphi$ with respect to the first, second and third parameters respectively by $D_1,D_2,D_3$. It is convenient now to describe the tangent space of $Q$:
\begin{equation}\label{tangent}
 T_{(f,\sigma,g)}Q=\{(v,s,h)\in H^k(\partial M)\times \mathbb R\times T_gG:(\mathscr D-\sigma)v+sf+(D_3\varphi|_{(f,\sigma,g)})(h)=0\}.
\end{equation}

The following two propositions proved in \cite[Lemma 3.7 and Proposition 3.9]{wang} (see also \cite{U}) will be crucial in the sequel:

\begin{proposition}\label{density0}
Let $(f,\sigma,g)\in Q$ with $\sigma\ne 0$, and consider the map $D_3\varphi|_{(f,\sigma,g)}:T_gG\to H^{k-1}(\partial M)$. Then
$$
({\rm Im}\,D_3\varphi|_{(f,\sigma,g)})^{\perp}\subseteq\{\psi\in H^{k-1}(\partial M):{\rm supp}(\psi)\subseteq f^{-1}(0)\},
$$ 
or, equivalently,
$$
\{\psi\in H^{k-1}(\partial M):{\rm supp}(\psi)\cap f^{-1}(0)=\emptyset\}\subseteq {\rm Im}\,D_3\varphi|_{(f,\sigma,g)}.
$$
In particular, for any $\psi\in H^{k-1}(\partial M)$ with ${\rm supp}(\psi)\cap f^{-1}(0)=\emptyset$, there exists a smooth function $\omega$ defined on $M$ such that
$$
D_3\varphi|_{(f,\sigma,g)}(\omega g)=\psi.
$$
\end{proposition}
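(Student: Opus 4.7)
The plan is a duality argument using conformal perturbations of the metric together with a localization step. I proceed in three stages.

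First, I compute $D_3\varphi|_{(f,\sigma,g)}(\omega g)$ explicitly. Along the conformal path $g_t=(1+2t\omega)g$, write $u_t=H_{g_t}f$ (so $u_t|_{\partial M}\equiv f$) and $u_t=Hf+t\dot u+O(t^2)$. The conformal variation of the Laplacian $\dot\Delta u=-2\omega\Delta u+(n-2)\langle\nabla\omega,\nabla u\rangle$, combined with $\Delta Hf=0$, yields
$$\Delta_g\dot u=-(n-2)\langle\nabla\omega,\nabla Hf\rangle\text{ in }M,\qquad\dot u|_{\partial M}=0,$$
while the outward unit normal varies as $\dot\nu=-\omega\nu$, giving
$$D_3\varphi|_{(f,\sigma,g)}(\omega g)=\partial_\nu\dot u-\sigma\omega f\text{ on }\partial M.$$
Pairing with $\psi\in H^{k-1}(\partial M)$ and applying Green's identity to $H\psi$ and $\dot u$ (using $\Delta H\psi=0$ and $\dot u|_{\partial M}=0$) gives $\int_{\partial M}\psi\,\partial_\nu\dot u=\int_M H\psi\,\Delta_g\dot u$; substituting the equation for $\dot u$ and integrating by parts once more (using $\Delta Hf=0$ and $\partial_\nu Hf=\sigma f$ on $\partial M$) produces the key identity
$$\langle\psi,D_3\varphi(\omega g)\rangle_{L^2(\partial M)}=(n-2)\int_M\omega\,\langle\nabla H\psi,\nabla Hf\rangle\,dv_g-(n-1)\sigma\int_{\partial M}\omega\psi f\,dA_g.$$

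Second, suppose $\psi$ lies in the orthogonal complement of the image of $D_3\varphi|_{(f,\sigma,g)}$ but $\mathrm{supp}(\psi)\not\subseteq f^{-1}(0)$. Pick $\xi_0\in\mathrm{supp}(\psi)$ with $f(\xi_0)\neq 0$; since $k$ is large enough for $\psi$ to be continuous, both $\psi$ and $f$ keep a fixed nonzero sign on some boundary neighborhood of $\xi_0$. Choose nonnegative bump functions $\omega_\varepsilon$ of $L^\infty$-norm one on $\partial M$, supported in a boundary ball of radius $\varepsilon$ around $\xi_0$, and extend them into $M$ in Fermi coordinates by $\omega_\varepsilon(\xi,t)=\omega_\varepsilon^{\partial M}(\xi)\chi(t/\varepsilon)$ with $\chi$ a smooth cutoff satisfying $\chi(0)=1$. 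In the pairing identity, the boundary contribution is of order $\varepsilon^{n-1}\sigma\psi(\xi_0)f(\xi_0)\neq 0$ (using $\sigma\neq 0$), while the bulk contribution is only $O(\varepsilon^n)$. Hence for $\varepsilon$ small the pairing is nonzero, contradicting the orthogonality assumption. This forces $\mathrm{supp}(\psi)\subseteq f^{-1}(0)$.

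Third, for the final claim I must produce an actual smooth $\omega$ with $D_3\varphi(\omega g)=\psi$ whenever $\mathrm{supp}(\psi)\cap f^{-1}(0)=\emptyset$. In dimension $n=2$ the bulk integral disappears and $\dot u\equiv 0$ by uniqueness of the harmonic extension with zero trace; hence $D_3\varphi(\omega g)=-\sigma\omega f$, and taking $\omega|_{\partial M}=-\psi/(\sigma f)$ (smooth by the support hypothesis) followed by any smooth extension into $M$ works exactly. For $n\geq 3$, start with an extension $\omega_0$ of $-\psi/((n-1)\sigma f)$, compute the residual $r=D_3\varphi(\omega_0 g)-\psi$, and correct by a $\tilde\omega$ with $\tilde\omega|_{\partial M}=0$ solving the elliptic boundary-value problem $\partial_\nu\dot{\tilde u}=-r$, where $\Delta_g\dot{\tilde u}=-(n-2)\langle\nabla\tilde\omega,\nabla Hf\rangle$ and $\dot{\tilde u}|_{\partial M}=0$. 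The main obstacle I anticipate is the solvability of this correction problem with \emph{smooth} $\tilde\omega$ rather than merely an approximate preimage: one combines closed-range and elliptic-regularity arguments for the associated Fredholm-type operator with the cokernel description already extracted in stage two; alternatively, since the problem is localized to a tubular neighborhood of $\mathrm{supp}(\psi)$ where $Hf\neq 0$ (by continuity from the boundary, as $f\neq 0$ there), the construction reduces to a standard local elliptic solvability question.
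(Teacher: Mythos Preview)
The paper does not give its own proof of this proposition; it is quoted verbatim from \cite[Lemma 3.7 and Proposition 3.9]{wang} (in the spirit of Uhlenbeck \cite{U}). So there is no in-paper proof to compare your attempt against, and what follows is an assessment of your argument on its own merits.

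Your Stages~1--2 are correct. The variational identity
\[
\langle\psi,D_3\varphi(\omega g)\rangle=(n-2)\int_M\omega\,\langle\nabla H\psi,\nabla Hf\rangle-(n-1)\sigma\int_{\partial M}\omega\psi f
\]
is right, and the scaling argument yields the inclusion $(\mathrm{Im}\,D_3\varphi)^\perp\subseteq\{\mathrm{supp}\,\psi\subseteq f^{-1}(0)\}$. (In fact one can avoid the scaling: taking $\omega\in C^\infty_c(M)$ kills the boundary term and forces $\langle\nabla H\psi,\nabla Hf\rangle\equiv 0$ in $M$; then varying $\omega|_{\partial M}$ gives $\psi f\equiv 0$ directly.) Your $n=2$ case of Stage~3 is also correct, since then $\dot u\equiv 0$ and the map becomes $\omega\mapsto-\sigma\omega f$, explicitly invertible on $\{\mathrm{supp}\,\psi\cap f^{-1}(0)=\emptyset\}$.

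The genuine gap is Stage~3 for $n\geq 3$. You must exhibit a smooth $\tilde\omega$ with $\tilde\omega|_{\partial M}=0$ whose associated $\dot{\tilde u}$ has prescribed Neumann data $-r$. This is an inverse problem---hitting an arbitrary boundary function via the Dirichlet-to-Neumann map of a source of the constrained form $-(n-2)\langle\nabla\tilde\omega,\nabla Hf\rangle$---and your sketch does not solve it. Two concrete obstructions: (i) the residual $r=\partial_\nu\dot u_0$ is \emph{not} supported in $\{f\neq 0\}$ in general, since solving the Dirichlet problem for $\dot u_0$ spreads the data over all of $\partial M$, so your proposed localization to a tube where $Hf\neq 0$ is unavailable; (ii) your ``closed-range and elliptic-regularity'' suggestion runs into the fact that the principal part of $\omega\mapsto D_3\varphi(\omega g)$ is multiplication by $-\sigma f$, which is not Fredholm on $H^{k-1}(\partial M)$ because $f$ has zeros. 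Neither issue is addressed, and the sentence ``one combines closed-range and elliptic-regularity arguments'' is not a proof. Note also that the two displayed inclusions in the statement are \emph{not} formally equivalent by abstract duality (the second is strictly stronger), so Stage~3 cannot be bypassed.
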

Using the previous proposition we get the following:
\begin{proposition}\label{density1}
${\rm Im}\,D\varphi|_{(f,\sigma,g)}=H^{k-1}(\partial M)$ for any $(f,\sigma,g)\in Q$ with $\sigma\ne 0$. In particular $0$ is a regular value of $\varphi$.
\end{proposition}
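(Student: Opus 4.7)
\textbf{Proof plan for Proposition \ref{density1}.} The plan is to exploit the orthogonal complement characterization: I will show that the only element of the dual of $H^{k-1}(\partial M)$ which annihilates ${\rm Im}\,D\varphi|_{(f,\sigma,g)}$ is zero, and then show that the range is closed. Combining these two facts gives surjectivity. First, I compute the full differential at a point $(f,\sigma,g)\in Q$: writing $\varphi(f,\sigma,g)=(\mathscr D-\sigma)f$ and differentiating in each slot yields
\begin{equation*}
D\varphi|_{(f,\sigma,g)}(v,s,h)\;=\;(\mathscr D-\sigma)v\;-\;sf\;+\;D_3\varphi|_{(f,\sigma,g)}(h),
\end{equation*}
for $(v,s,h)\in H^k(\partial M)\times\mathbb R\times T_gG$. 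Let $\psi$ be a distribution in the dual of $H^{k-1}(\partial M)$ which pairs to zero against every element of the image.

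Testing against $(v,0,0)$ for arbitrary $v\in H^k(\partial M)$ gives $\langle\psi,(\mathscr D-\sigma)v\rangle=0$. Since $\mathscr D$ is (formally) self-adjoint with respect to the $L^2(\partial M)$ pairing, this means $(\mathscr D-\sigma)\psi=0$ in the weak sense. By elliptic regularity for the pseudodifferential operator $\mathscr D-\sigma$, the distribution $\psi$ is in fact a smooth eigenfunction of $\mathscr D$ at the eigenvalue $\sigma$; in particular $H\psi$ is a smooth Steklov eigenfunction on $M$. Testing against $(0,1,0)$ gives $\int_{\partial M}\psi f=0$, so $\psi$ is $L^2$-orthogonal to $f$ inside the $\sigma$-eigenspace.

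The key step uses the metric variation. Testing against $(0,0,h)$ for arbitrary $h\in T_gG$, we have $\langle\psi,D_3\varphi|_{(f,\sigma,g)}(h)\rangle=0$, that is, $\psi\in({\rm Im}\,D_3\varphi|_{(f,\sigma,g)})^\perp$. Proposition \ref{density0} then forces ${\rm supp}(\psi)\subseteq f^{-1}(0)$. Now, because $f$ is a non-trivial Dirichlet-to-Neumann eigenfunction with $\sigma\ne 0$, its zero set on $\partial M$ has empty interior: if $f$ vanished identically on an open subset $U\subset\partial M$, then on $U$ the harmonic extension $Hf$ would satisfy both $Hf=0$ and $\partial_\nu Hf=\sigma f=0$, and Cauchy uniqueness for harmonic functions would force $Hf\equiv 0$, hence $f\equiv 0$, a contradiction. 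Since $\psi$ is smooth and vanishes on the dense open set $\partial M\setminus f^{-1}(0)$, it must vanish identically.

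It remains to convert the triviality of the orthogonal complement into surjectivity. The subspace $V:={\rm Im}(D_1\varphi+D_2\varphi)=\{(\mathscr D-\sigma)v-sf\}$ is closed in $H^{k-1}(\partial M)$ with finite-dimensional cokernel, because $\mathscr D-\sigma$ is Fredholm of index zero and adding the one-dimensional $\mathbb Rf$ preserves the finite-codimension property. Since ${\rm Im}\,D\varphi|_{(f,\sigma,g)}\supseteq V$ is a superspace of a closed finite-codimension subspace, it is itself closed. A closed subspace with trivial annihilator equals the whole space, so ${\rm Im}\,D\varphi|_{(f,\sigma,g)}=H^{k-1}(\partial M)$, and consequently $0$ is a regular value of $\varphi$. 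The main obstacle is the interplay with Proposition \ref{density0}, namely upgrading ``support disjoint from $f^{-1}(0)$'' to full surjectivity; this is handled precisely by combining the smoothness of $\psi$ (from the eigenfunction equation) with the unique continuation property of harmonic functions via their boundary Cauchy data.
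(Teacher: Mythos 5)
Your argument is correct and follows exactly the route the paper intends: the paper does not prove Proposition \ref{density1} itself but cites it from \cite{wang} (see also \cite{U}), noting only that it follows from Proposition \ref{density0}, and your reconstruction --- showing that any annihilator $\psi$ of the image is, by self-adjointness and elliptic regularity of $\mathscr D-\sigma$, a smooth $\sigma$-eigenfunction supported in $f^{-1}(0)$, which must vanish since $f^{-1}(0)$ has empty interior by Cauchy uniqueness for $Hf$, combined with closedness of the range via the Fredholm property of $\mathscr D-\sigma$ --- is precisely that standard derivation. No gaps.
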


We are now in position to prove Theorem \ref{T_gen}.

\begin{proof}[Proof of Theorem \ref{T_gen}]
We wish to apply Theorem \ref{trans2} with $Q=\varphi^{-1}(0),$ $B=G$, $X=M$, $Y=TM$, $F=\beta$. Note that $\beta$ is smooth, and the projection map $\pi:Q\to G$ (restriction to $Q$ of the projection $\pi:H^k(\partial M)\times \mathbb R\times G\to G$) has Fredholm index $0$ (see \cite[Lemma 2.5]{U}). Hence we have to prove that $\beta$ is transverse to $Y'=0$ the zero section of $TM$ (i.e., the zero vector field on $M$).

\medskip

To do so, let $\beta_x:=\beta(\cdot,\cdot,\cdot,x):Q\to T_xM$. We need to prove that $D\beta_x:T_{(f,\sigma,g)}\to T_xM$ is surjective when $\beta_x(f,\sigma,g)=0$ (and $\sigma\ne 0$). Writing $D\beta_x$ explicitly, since $H$ is linear, we have
$$
D\beta_x|_{(f,\sigma,g)}(v,s,h)=\nabla Hv(x).
$$
We recall that $f$ is an eigenfunction of $\mathscr D$ with eigenvalue $\sigma$ for the metric $g$. We also assume that $\sigma\ne 0$ (that is, we consider non-constant eigenfunctions). Assume now that there exists $0\ne V\in T_xM$ such that $\langle \nabla Hv(x),V \rangle =0$ for all $v$ such that $(v,s,h)\in T_{(f,\sigma,g)}Q$. Let us consider the following two situations:
\begin{itemize}
\item $(f,0,0)\in T_{(f,\sigma,g)}Q$ (in fact, it satisfies \eqref{tangent}). Hence $D\beta_x|_{(f,\sigma,g)}(f,0,0)=\nabla Hf(x)$ and by assumption $\langle \nabla Hf(x),V\rangle=0$.
\item From Proposition \ref{density0} it follows that for all $\psi\in H^{k-1}(\partial M)$ with ${\rm supp}(\psi)\cap f^{-1}(0)=\emptyset$, there exists a conformal variation of the metric $\omega g$ such that $D_3\varphi|_{f,\sigma,g}(\omega g)=\psi$ ($\omega$ is a smooth function on $M$). Let $\psi\in{\rm Ker}\,(\mathscr D-\sigma)^{\perp}$ be such that ${\rm supp}(\psi)\cap f^{-1}(0)=\emptyset$, and let $\omega$ the corresponding conformal factor from Porposition \ref{density0}. We also have that $\psi=\sum_{j:\sigma_j\ne\sigma}a_jf_j$, where $\{f_j\}$ is a $L^2(\partial M)$ orthonormal basis of $H^{1/2}(\partial M)$ of eigenfunctions of $\mathscr D$. Let us define
$$
v:=-\sum_{\sigma_j\ne\sigma}\frac{a_jf_j}{\sigma_j-\sigma}.
$$
Then $(v,0,\omega g)\in T_{f,\sigma,g}Q$, in fact $(\mathscr D-\sigma)v+D_3\phi|_{f,\sigma,g}(\omega g)=-\psi+\psi=0$, so that the condition in \eqref{tangent} is satisfied. Hence
$$
\langle\nabla v(x),V\rangle=\sum_{\sigma_j\ne\sigma}\frac{a_j}{\sigma_j-\sigma}\langle\nabla Hf_j(x),V\rangle=0.
$$
We recall that $a_j=\int_{\partial M}\psi f_j$. Since $\psi\in{\rm Ker}\,(\mathscr D-\sigma)^{\perp}$ with ${\rm supp}(\psi)\cap f^{-1}(0)=\emptyset$ is dense in ${\rm Ker}\,(\mathscr D-\sigma)^{\perp}$ with respect to the $L^2(\partial M)$ norm, we get that $\langle\nabla Hf_j(x),V\rangle=0$ for all $j$ with $\sigma_j\ne\sigma$.
\end{itemize}
In conclusion, assuming that $D\beta_x$ is not surjective, we get that there exists $0\ne V\in T_xM$ such that
$$
\langle \nabla H(x),\nabla V\rangle=0
$$
for all harmonic functions $H$ in $M$. The proof is concluded if we prove that for any $x\in M$ and any $0\ne V\in T_xM$ there exists a harmonic function $H$ such that $\nabla H(x)=V$. We prove this fact in Proposition \ref{prop_grad} here below.
\end{proof}

\begin{proposition}\label{prop_grad} For any $x\in M$, $0\ne V\in T_xM$ there exists a harmonic function $H$ in $M$ with $\nabla H(x)=V$. 
\end{proposition}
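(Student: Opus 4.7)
The plan is to argue by contradiction. Since the space of smooth harmonic functions on $M$ is infinite-dimensional---it is parametrized by its boundary data via the harmonic extension $f\mapsto Hf$, $f\in C^\infty(\partial M)$---while $T_xM$ is $n$-dimensional, the linear evaluation $H\mapsto \nabla H(x)$ fails to be surjective only if there exists $V\in T_xM\setminus\{0\}$ such that $\langle V,\nabla H(x)\rangle=0$ for every harmonic $H$ on $M$. The idea is to derive a contradiction by constructing from this vanishing a harmonic function on $M\setminus\{x\}$ with zero Cauchy data on $\partial M$ but with a non-removable singularity at $x$. Throughout, assume $x$ lies in the interior of $M$.

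Specializing the orthogonality to harmonic extensions of boundary data, using the Poisson representation $Hf(y)=\int_{\partial M}P(y,\xi)f(\xi)\,d\sigma(\xi)$ and exploiting density of $C^\infty(\partial M)$ in $L^2(\partial M)$, I would first deduce that
\[
\bigl\langle V,\nabla_y P(y,\xi)\big|_{y=x}\bigr\rangle=0 \qquad \text{for every }\xi\in\partial M.
\]
Using the classical identity $P(y,\xi)=-\partial_{\nu_\xi}G(y,\xi)$, with $G$ the Dirichlet Green's function of the Laplace--Beltrami operator on $M$, this rewrites as $\partial_{\nu_\xi}\Phi\equiv 0$ on $\partial M$, where
\[
\Phi(\xi):=\bigl\langle V,\nabla_y G(y,\xi)\big|_{y=x}\bigr\rangle,\qquad \xi\in M\setminus\{x\}.
\]

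Two observations give the required properties of $\Phi$. First, by symmetry $G(y,\xi)=G(\xi,y)$ together with harmonicity of $G(\cdot,y)$ on $M\setminus\{y\}$, and since $\Delta_\xi$ commutes with differentiation in $y$ at $y=x$, the function $\Phi$ is harmonic on $M\setminus\{x\}$. Second, $G(y,\xi)=0$ whenever $\xi\in\partial M$ (by the Dirichlet condition combined with symmetry), so the function $y\mapsto G(y,\xi)$ is identically zero for each $\xi\in\partial M$, and hence all its derivatives at $x$ vanish and $\Phi\equiv 0$ on $\partial M$. Thus $\Phi$ has vanishing Cauchy data on the entire boundary. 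The standard boundary unique continuation principle for elliptic operators with smooth coefficients---which one may verify concretely by even reflection of $\Phi$ across $\partial M$ in Fermi coordinates (as in the proof of Theorem \ref{T_0}), followed by interior unique continuation---then forces $\Phi$ to vanish in an open neighborhood of $\partial M$ inside $M\setminus\{x\}$, and hence, by connectedness of $M\setminus\{x\}$ for $n\geq 2$, on all of $M\setminus\{x\}$.

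The contradiction follows from the well-known singular behavior of the Green's function at the diagonal: $G(y,\xi)\sim c_n|y-\xi|^{2-n}$ for $n\geq 3$ (and $G\sim-\tfrac{1}{2\pi}\log|y-\xi|$ for $n=2$), so that in normal coordinates centered at $x$ one has
\[
\Phi(\xi)\sim c\,\frac{\langle V,x-\xi\rangle}{|x-\xi|^n}
\]
as $\xi\to x$, with the analogous logarithmic-derivative expression in dimension two. This is of order $|x-\xi|^{1-n}$ and, choosing $\xi$ to approach $x$ along the direction $V$, it is non-vanishing, contradicting $\Phi\equiv 0$. The main technical point to handle carefully is the invocation of boundary unique continuation, which is classical for smooth metrics; the remainder is Green's-function calculus. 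The case $x\in\partial M$ is not actually required by Theorem \ref{T_gen}, since generically the Steklov eigenfunctions have no critical points on $\partial M$ by Wang's results, so one may restrict attention to interior $x$.
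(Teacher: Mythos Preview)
Your argument is correct and takes a genuinely different route from the paper's. The paper proves the proposition by a Runge-type approximation: it fixes a small neighbourhood $U$ of $x$ and shows, via a Hahn--Banach duality argument, that restrictions to $U$ of harmonic functions on $M$ are dense in $H^m(U)$ (for $m$ large, hence in $C^1$) among all harmonic functions on $U$. Since one can prescribe the gradient at $x$ locally (harmonic coordinates), and since the image of the linear evaluation $H\mapsto\nabla H(x)$ is a subspace of the finite-dimensional $T_xM$, density forces surjectivity. Your approach instead manufactures directly the function $\Phi(\xi)=\langle V,\nabla_y G(y,\xi)|_{y=x}\rangle$ from the Dirichlet Green's function, shows it has vanishing Cauchy data on $\partial M$, and derives a contradiction from its non-removable singularity at $x$. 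Both proofs ultimately rest on unique continuation; yours is more explicit and avoids the functional-analytic machinery, at the cost of invoking Green's-function asymptotics near the diagonal. Your remark that only interior $x$ is needed matches the paper's own (implicit) use of the proposition.

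One small correction: the appeal to ``even reflection of $\Phi$ across $\partial M$ in Fermi coordinates (as in the proof of Theorem \ref{T_0})'' is misleading. For a variable-coefficient Laplacian, even reflection across $\partial M$ does not in general yield a harmonic function, and the construction in Theorem \ref{T_0} modifies the function via a reparametrisation $\phi$ precisely so that the extension is only $C^1$, with no harmonicity claimed. The clean justification of boundary unique continuation here is to extend $\Phi$ by zero across $\partial M$ into a smooth enlargement of $(M,g)$: since $\Phi=\partial_\nu\Phi=0$ on $\partial M$, the extension is $C^1$ and weakly harmonic, hence smooth, and vanishes on an open set; interior unique continuation then gives $\Phi\equiv 0$ on $M\setminus\{x\}$. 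This is classical, so the gap is cosmetic rather than substantive.
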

 
One can check that the proposition is true when $M$ is a simply connected surface. In fact, $M$ can be conformally mapped to $B(0,1)\subset\mathbb R^2$. In particular, for any fixed $x\in M$, there is always a unique (up to rotations) $\phi:M\to B(0,1)$ conformal diffeomorphism such that $\phi(x)=0$. Then, taking $H_i=x_i\circ\phi$ ($x_i$ coordinate functions in $\mathbb R^2$), we have that $\Delta H_i=0$ and $\{\nabla H_i(x)\}_{i=1}^2$ span $T_xM$. In general, the proposition is true for any domain of $\mathbb R^n$ by just taking the coordinate functions (in a coordinate system where $x$ is the origin). Also, it is immediately true for any surface of genus $g$ and $k$ boundary components by the Uniformization theorem. For a higher dimensional Riemannian manifold we need a slightly more involved proof.

\begin{proof}[Proof of Proposition \ref{prop_grad}]
It is a standard fact to prove that for any $x\in M$ and any $0\ne V\in T_xM$, there exists a neighborhood of $x$ (which can be taken open, smooth) and a smooth $\varphi:U\to\mathbb R$, continuous on $\overline U$, such that $\Delta\varphi=0$ in $U$. For example, just take (a linear combination of) harmonic coordinates in a neighborhood of $x$. 

\medskip

Let now $U\subset M$ be a smooth open subset of $M$. For any $m\geq 2$ let
$$
X=\{v\in H^m(U):\Delta v=0\}
$$
and
$$
Y=\{u\in H^m(M):\Delta u=0\}.
$$
We prove that, given $v\in X$ and $\epsilon>0$ there exists $u\in Y$ such that
$$
\|v-u_{|_{U}}\|_{H^m(U)}<\epsilon.
$$
If we take $m$ sufficiently large, then by Sobolev embedding we have just that any harmonic function in $v$ in $H^m(U)\subset C^1(U)$ can be approximated in the $C^1(U)$-norm by harmonic functions in $H^m(M)\subset C^1(M)$, and this is enough to prove the proposition.

\medskip

Consider the map $R:H^{m-1/2}(\partial M)\to X$ given by
$$
Rg=u_{|_{U}}\,,
$$
where $u\in H^m(M)$ solves $\begin{cases}\Delta u=0 & {\rm in\ M}\\u=g & {\rm on\ }\partial M\end{cases}$. We need to prove that ${\rm Im}(R)\subset X$ is dense in $X$ in the $H^m(M)$ norm. By the Hahn-Banach Theorem, it is enough to prove that for any linear functional $T:H^m(U)\to\mathbb R$ such that $T(Rg)=0$ for all $g\in H^{m-1/2}(\partial M)$, we have $T(v)=0$ for all $v$ in $X$.

\medskip

By duality, for any such $T$ there exists $h\in H^{-m}(U)$ such that $T(\cdot)=\int_{U}h(\cdot)$.

\medskip Let $\tilde h\in H^{-m}(M)$ be defined by
$$
\tilde h=\begin{cases}
h & {\rm in\ } U\\
0 & {\rm in\ } M\setminus\overline U.
\end{cases}
$$ 
and let $w\in H^{-m+2}(M)$ be the solution to
$$
\begin{cases}
-\Delta w=\tilde h & {\rm in\ }M\\
w=0 & {\rm on\ }\partial M.
\end{cases}
$$
(in the sense that $\int_M w(-\Delta\phi)=\int_M\tilde h\phi$ for all $\phi\in H^m(M)\cap H^1_0(M)$).

\medskip

Let $g\in H^{m-1/2}(\partial M)$ and $u\in H^m(M)$ be such that $\Delta u=0$ in $M$, $u=g$ on $\partial M$. 

\medskip

Assume that $T(Rg)=0$ for all $g\in H^{m-1/2}(\partial M)$ (recall that $Rg=u_{|_U}$). Since $w$ can be approximated in the $H^{-m+2}(M)$ norm by functions in $C^{\infty}_c(M)$, we deduce that
$$
T(Rg)=\int_U u h=\int_M\tilde h u=\int_{\partial M}\partial_{\nu}w g.
$$
In the last integral the duality is between $H^{-m+\frac{1}{2}}$ and $H^{m-\frac{1}{2}}$. Hence, since $T(Rg)=0$ for all $g$ we deduce that $\partial_{\nu}w=0$ in $H^{-m+\frac{1}{2}}(\partial M)$. However, $\tilde h\equiv 0$ in a neighborhood of $\partial M$, and by elliptic regularity we get that $w$ is $C^{\infty}$ in a neighborhood of $\partial M$, with $w=\partial_{\nu}w=0$. Therefore $w\equiv 0$ in this neighborhood, and by unique continuation, $w\equiv 0$ on $M\setminus\overline U$. We deduce that, on $U$, $h=-\Delta w_{|_U}$ (in $H^{-m}(U)$) and $w=\partial_{\nu}w=0$ on $\partial U$ in the appropriate sense.

\medskip

We are ready to conclude: let $v\in X$. Then
$$
T(v)=\int_U hv=-\int_U \Delta w v=\int_{\partial U}(\partial_{\nu}v w-\partial_{\nu}wv)-\int_Uw \Delta v=0.
$$
\end{proof}

We conclude this section by stating the following additional generic result, even though it is not used for the purposes of this paper.

\begin{theorem}\label{T_gen_2}
Let $M$ be a compact smooth $n$-dimensional manifold with smooth boundary $\partial M$. Then for a generic metric $g$ on $M$ all Steklov eigenfunctions have no singular zeros in $M$.
\end{theorem}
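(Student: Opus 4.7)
The plan is to mimic the proof of Theorem \ref{T_gen} above, this time applying Theorem \ref{trans2} to a different evaluation map. First I would introduce
\[
\gamma: Q \times M \longrightarrow \mathbb{R} \times TM, \qquad \gamma(f,\sigma,g,x) = \bigl(Hf(x),\, \nabla Hf(x)\bigr),
\]
and take the distinguished submanifold $Y' = \{0\} \times Z$, where $Z \subset TM$ is the zero section. A singular zero of the Steklov eigenfunction $Hf$ is precisely a point $x \in M$ with $\gamma(f,\sigma,g,x) \in Y'$, so my aim is to show that, for a residual set of metrics, no such $x$ exists for any eigenpair $(f,\sigma)$ with $\sigma \ne 0$.

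The dimension bookkeeping is favourable. The submanifold $Y' \subset Y = \mathbb{R} \times TM$ has codimension $n+1$, strictly larger than $\dim X = n$. By a dimension argument analogous to the one in Theorem \ref{T_gen}, and taking into account the scaling symmetry $f \mapsto cf$ (which preserves singular zeros, so that any non-empty preimage $\gamma_g^{-1}(Y')$ would already be positive-dimensional), transversality of $\gamma_g$ to $Y'$ forces $\gamma_g^{-1}(Y') = \emptyset$. The regularity hypothesis $k > \max\{1, \dim X + \dim Y' - \dim Y\} = 1$ is trivially met, and the Fredholm property of the projection $\pi$ has already been verified.

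The core step is verifying that $\gamma$ itself is transverse to $Y'$. Exactly as in the proof of Theorem \ref{T_gen}, this reduces to showing that at every $(f,\sigma,g,x) \in \gamma^{-1}(Y')$ with $\sigma \ne 0$, the linear map
\[
T_{(f,\sigma,g)}Q \;\longrightarrow\; \mathbb{R} \times T_xM, \qquad (v,s,h) \;\longmapsto\; \bigl(Hv(x),\, \nabla Hv(x)\bigr)
\]
is surjective. If it were not, there would exist $(c,V) \ne (0,0)$ such that $c\,Hv(x) + \langle \nabla Hv(x), V\rangle = 0$ for every $(v,s,h) \in T_{(f,\sigma,g)}Q$. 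Plugging in $(f,0,0)$ and exploiting the conformal variations $(v,0,\omega g)$ provided by Proposition \ref{density0}, the very same two-case argument as in the proof of Theorem \ref{T_gen} produces
\[
c\,H(x) + \langle \nabla H(x), V\rangle = 0 \qquad \text{for every harmonic function $H$ on $M$.}
\]

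The main obstacle, and the natural strengthening of Proposition \ref{prop_grad}, is then to rule out this annihilator: the $1$-jet evaluation $H \mapsto (H(x), \nabla H(x))$ from harmonic functions on $M$ into $\mathbb{R}\times T_xM$ must be surjective. To prove this, I would first exhibit, in a small neighbourhood $U$ of $x$, a local harmonic function $\varphi$ with arbitrary prescribed value $\varphi(x)$ and gradient $\nabla\varphi(x)$ (for instance, a linear combination of the constant $1$ and the harmonic coordinates centred at $x$). The Runge-type approximation already proved in Proposition \ref{prop_grad} --- density of the restriction map via Hahn--Banach duality, an adjoint Poisson problem with Dirichlet data, and unique continuation --- then approximates $\varphi$ in $H^m(U) \hookrightarrow C^1(U)$ by global harmonic functions on $M$, so that both $H(x)$ and $\nabla H(x)$ are approximated simultaneously. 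Since the image of the $1$-jet map is a subspace of the finite-dimensional target $\mathbb{R}\times T_xM$, density upgrades to surjectivity, contradicting the existence of $(c,V)$ and completing the transversality check.
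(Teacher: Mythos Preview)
Your argument is correct, but it is more elaborate than the paper's. The paper replaces $\beta$ not by the full $1$-jet map $\gamma=(Hf(x),\nabla Hf(x))$ but simply by the evaluation
\[
\alpha(f,\sigma,g,x)=Hf(x),
\]
with target $Y=\mathbb{R}$ and $Y'=\{0\}$. The transversality check for $\alpha$ then only asks that some admissible $(v,s,h)\in T_{(f,\sigma,g)}Q$ satisfy $Hv(x)\ne 0$; the same two-case argument as in Theorem~\ref{T_gen} gives this without any strengthening of Proposition~\ref{prop_grad} (one merely needs a global harmonic function not vanishing at $x$, and constants already suffice). Once Theorem~\ref{trans2} produces a residual set of metrics $g$ for which $\alpha_g$ is transverse to $\{0\}$, the absence of singular zeros follows because, after intersecting with the residual set of regular values of $\pi$ (equivalently, simple spectrum), the fibre $\pi^{-1}(g)$ is discrete; at any zero $Hf(x)=0$ the fibre direction contributes nothing to $D\alpha_g$, so transversality forces the $x$-derivative $\nabla Hf(x)$ to be nonzero.

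Your route packages value and gradient into a single map and reads off emptiness directly from the codimension count $n+1>n$ together with the scaling symmetry. This is perfectly valid, and the required $1$-jet surjectivity is indeed a painless extension of Proposition~\ref{prop_grad} (constants plus harmonic coordinates locally, then the same Runge-type approximation). The paper's version is shorter and avoids that extension; yours is arguably more transparent about why the preimage is actually empty rather than merely of low dimension.
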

The proof of this result can be performed essentially as that of Theorem \ref{T_gen}, with $\beta$ replaced by the function
$$
\alpha:Q\times M\to \mathbb R
$$
defined by
$$
\alpha(f,\sigma,g,x)=Hf(x),
$$
and is accordingly omitted (see also \cite{U,wang}).

\begin{remark}
We can see in the case of the unit ball of $\mathbb R^n$ that the properties stated in Theorems \ref{T_gen} and \ref{T_gen_2} may fail. In fact, $0$ is a critical point for $u_k$ when $k\geq n+2$, and it is degenerate when $k\geq\frac{(n+1)(n+2)}{2}$. However the second eigenvalue is $1$, and a second eigenfunction is a coordinate function (or a linear combination of coordinate functions), which is Morse, and has no interior critical zeros. Hence it is reasonable to expect that these two properties, which we have seen to hold generically for any eigenfunction, are always satisfied by a second eigenfunction, as proved in Section \ref{S2}  for the genus $0$ case. It remains open the question whether the hypothesis on the genus can be removed.
\end{remark}

\section*{Acknowledgments}
The authors wish to thank Katie Gittins and Iosif Polterovich for useful discussions on the topic.

\bibliography{bibliography}{}
\bibliographystyle{abbrv}
\end{document}